\documentclass{amsart}

\usepackage{amssymb}
\usepackage{mathrsfs,mathtools}
\usepackage{enumerate}
\usepackage{esint}
\usepackage{titletoc}
\usepackage[colorlinks=true, urlcolor=red, linkcolor=red, citecolor=blue]{hyperref}

\theoremstyle{plain}
\newtheorem{theorem}{Theorem}[section]
\newtheorem{lemma}[theorem]{Lemma}

\newtheorem{proposition}[theorem]{Proposition}

\theoremstyle{definition}
\newtheorem{definition}[theorem]{Definition}

\newtheorem{example}[theorem]{Example}

\numberwithin{equation}{section}

\makeatletter
\@namedef{subjclassname@2020}{\textup{2020} Mathematics Subject Classification}
\makeatother


\title{Removable singularities for nonlocal minimal graphs}

\author{Minhyun Kim}
\address{Department of Mathematics \& Research Institute for Natural Sciences, Hanyang University, 04763 Seoul, Republic of Korea}
\email{minhyun@hanyang.ac.kr}

\subjclass[2020]{53A10, 49Q05, 35J60}
\keywords{Nonlocal mean curvature, nonlocal minimal graph, removable singularity}
\thanks{M.~Kim is supported by the National Research Foundation of Korea (NRF) grant funded by the Korean government (MSIT) (RS-2023-00252297).}


\begin{document}

\begin{abstract}
We prove the removable singularity theorem for nonlocal minimal graphs. Specifically, we show that any nonlocal minimal graph in $\Omega \setminus K$, where $\Omega \subset \mathbb{R}^n$ is an open set and $K \subset \Omega$ is a compact set of $(s, 1)$-capacity zero, is indeed a nonlocal minimal graph in all of $\Omega$.
\end{abstract}

\maketitle

\section{Introduction}

It is well known that solutions of the minimal surface equation cannot have singularities occupying a set of $(n-1)$-dimensional Hausdorff measure zero. Specifically, if $u$ is a continuous weak solution of
\begin{equation}\label{eq-MSE}
\mathrm{div}\left( \frac{Du}{\sqrt{1+|Du|^2}} \right) = 0
\end{equation}
in $\Omega \setminus K$, where $\Omega \subset \mathbb{R}^n$ is an open set and $K \subset \Omega$ is a compact set of $(n-1)$-dimensional Hausdorff measure zero (or more generally, a compact set of $1$-capacity zero), then $u$ can be extended to a continuous weak solution of \eqref{eq-MSE} in $\Omega$. This result was first established by Bers~\cite{Ber51} when $K$ consists of a single point. Later, the full result was independently proved by Nitsche~\cite{Nit65} for $n=2$ and by De~Giorgi--Stampacchia~\cite{DGS65} for general $n \geq 2$.

The removable singularity theorem for the minimal surface equation has been generalized to a large class of elliptic equations. For instance, Serrin~\cite{Ser64,Ser65} proved the same result for quasilinear elliptic equations including equations of bounded prescribed mean curvature. Moreover, V\'azquez--V\'eron~\cite{VV81} extended this result further to include the capillarity equation.

The aim of this paper is to study removable singularities for nonlocal minimal graphs, which are solutions of the nonlocal minimal surface equation. To state the main results, we first provide several definitions and recall known results in the literature. The study begins with the $s$-perimeter, which was first introduced in the influential paper by Caffarelli--Roquejoffre--Savin~\cite{CRS10}. Given $s \in (0,1)$ and $n \in \mathbb{N}$, the \emph{$s$-perimeter} of a measurable set $E \subset \mathbb{R}^{n+1}$ in an open set $\mathcal{O} \subset \mathbb{R}^{n+1}$ is defined by
\begin{equation*}
\mathrm{Per}_s(E, \mathcal{O}) \coloneqq L_s(E \cap \mathcal{O}, \mathbb{R}^{n+1} \setminus E) + L_s(E \setminus \mathcal{O}, \mathcal{O} \setminus E),
\end{equation*}
where
\begin{equation*}
L_s(A, B) \coloneqq \int_A \int_B \frac{\mathrm{d}Y \,\mathrm{d}X}{|X-Y|^{n+1+s}}, \quad A, B \subset \mathbb{R}^{n+1}\text{ measurable}.
\end{equation*}
We say that a measurable set $E \subset \mathbb{R}^{n+1}$ is \emph{$s$-minimal} in an open set $\mathcal{O} \subset \mathbb{R}^{n+1}$ if $\mathrm{Per}_s(E, \mathcal{O})< \infty$ and
\begin{equation*}
\mathrm{Per}_s(E, \mathcal{O}) \leq \mathrm{Per}_s(F, \mathcal{O}) \quad\text{for every }F \subset \mathbb{R}^{n+1} \text{ s.t.~}F\setminus \mathcal{O} = E \setminus \mathcal{O}.
\end{equation*}
We also say that $E$ is \emph{locally $s$-minimal} in $\mathcal{O}$ if $E$ is $s$-minimal in every open set $\mathcal{O}' \Subset \mathcal{O}$. The boundary $\partial E$ of the $s$-minimal or locally $s$-minimal set $E$ is often called \emph{nonlocal minimal surface} in $\mathcal{O}$. In this work, we focus on the locally $s$-minimal sets, and the term ``nonlocal minimal surface'' refers to these sets.

The main result in this paper is concerned with a particular class of nonlocal minimal surfaces; \emph{nonlocal minimal graphs}. We call $\partial E$ a \emph{nonlocal minimal graph} in an open set $\Omega \subset \mathbb{R}^n$ if $E$ is locally $s$-minimal in the infinite vertical cylinder $\Omega \times \mathbb{R}$ and if $E$ can be written as the entire subgraph of a measurable function $u: \mathbb{R}^n \to \mathbb{R}$, i.e.,
\begin{equation*}
E=\mathcal{S}_u \coloneqq \{(x, t) \in \mathbb{R}^n \times \mathbb{R}: t<u(x) \}.
\end{equation*}
The study on nonlocal minimal graphs has been motivated by the work \cite{DSV16}, where Dipierro--Savin--Valdinoci proved that an $s$-minimal set $E$ in a cylinder $\mathcal{O}=\Omega \times \mathbb{R}$, which is the subgraph of a continuous function outside $\mathcal{O}$, remains a subgraph in the entire space. The existence of such a minimizing set was established by Lombardini~\cite{Lom18}. For the smoothness of nonlocal minimal graphs, we refer the reader to Caffarelli--Roquejoffre--Savin~\cite{CRS10}, Barrios--Figalli--Valdinoci~\cite{BFV14}, Figalli--Valdinoci~\cite{FV17}, and Cabr\'e--Cozzi~\cite{CC19}.

In \cite{CRS10}, it was proved that the Euler--Lagrange equation of $\mathrm{Per}_s$ in $\mathcal{O}$ is given by
\begin{equation*}
H_s[E](X)=0, \quad X \in \mathcal{O}\cap \partial E,
\end{equation*}
where
\begin{equation*}
H_s[E](X) \coloneqq \mathrm{p.v.} \int_{\mathbb{R}^{n+1}} \frac{\chi_{\mathbb{R}^{n+1} \setminus E}(Y) - \chi_E(Y)}{|X-Y|^{n+1+s}} \,\mathrm{d}Y
\end{equation*}
denotes the \emph{nonlocal mean curvature} (or \emph{$s$-mean curvature}) of $E$ at a point $X \in \partial E$. Caffarelli--Valdinoci~\cite{CV13} proved that the nonlocal mean curvature converges to the classical mean curvature as $s \nearrow 1$. Moreover, they also proved that if $E$ is given by $E=\mathcal{S}_u$ for some measurable function $u: \mathbb{R}^n \to \mathbb{R}$, then its nonlocal mean curvature can be written as
\begin{equation*}
H_s[E](x, u(x)) = 2\,\mathrm{p.v.} \int_{\mathbb{R}^n} G_s\left( \frac{u(x)-u(y)}{|x-y|} \right) \frac{\mathrm{d}y}{|x-y|^{n+s}} \eqqcolon \mathscr{H}_su(x)
\end{equation*}
at a point $x \in \mathbb{R}^n$, where
\begin{equation*}
G_s(t) = \int_0^t \frac{\mathrm{d}\tau}{(1+\tau^2)^{\frac{n+1+s}{2}}}.
\end{equation*}
See also Abatangelo--Valdinoci~\cite{AV14}, Barrios--Figalli--Valdinoci~\cite[Section~3]{BFV14}, and Bucur--Lombardini--Valdinoci~\cite[Appendix~B.1]{BLV19}. Thus, nonlocal minimal graphs satisfy the \emph{nonlocal minimal surface equation}
\begin{equation}\label{eq-NMSE}
\mathscr{H}_su(x) = 0.
\end{equation}
As described in \cite{CV13}, the minimal surface equation \eqref{eq-MSE} can be viewed as a local analogue of \eqref{eq-NMSE} in the limiting case $s=1$. For further details, we refer the reader to Caffarelli--Valdinoci~\cite{CV11} regarding the convergence of the $s$-perimeter to the classical perimeter as $s \nearrow 1$, and to Ambrosio--De Philippis--Martinazzi~\cite{ADPM11} for its Gamma-convergence.

The main result---the removable singularity theorem---extends to a broader class of equations, including those of prescribed nonlocal mean curvature. Specifically, we consider the equation
\begin{equation}\label{eq-main}
- \int_{\mathbb{R}^n} \mathscr{A}\left( x, y, \frac{u(x)-u(y)}{|x-y|} \right) \frac{\mathrm{d}y}{|x-y|^{n+s}} = \mathscr{B}(x, u),
\end{equation}
where $\mathscr{A}: \mathbb{R}^n \times \mathbb{R}^n \times \mathbb{R} \to \mathbb{R}$ and $\mathscr{B}: \mathbb{R}^n \times \mathbb{R} \to \mathbb{R}$ are Carath\'eodory functions satisfying $\mathscr{A}(x, y, 0)=0$ for a.e.\ $x, y \in \mathbb{R}^n$. We impose the following structural conditions on $\mathscr{A}$ and $\mathscr{B}$: We assume that $\mathscr{A}$ is monotone in the last variable and bounded, i.e.,
\begin{equation}\label{eq-A1}
\mathscr{A}(x, y, t_1) \leq \mathscr{A}(x, y, t_2) \quad\text{if }t_1<t_2, \quad \text{for a.e.\ $x, y \in \mathbb{R}^n$}
\end{equation}
and there exists a positive constant $\Lambda$ such that
\begin{equation}\label{eq-A2}
|\mathscr{A}(x, y, t)| \leq \Lambda \quad\text{for a.e.\ $x, y \in \mathbb{R}^n$ and for every $t \in \mathbb{R}$}.
\end{equation}
Since two conditions \eqref{eq-A1} and \eqref{eq-A2} do not exclude trivial function $\mathscr{A} \equiv 0$, we also assume that
\begin{equation}\label{eq-A3}
\inf_{x, y \in \mathbb{R}^n} \mathscr{A}(x, y, 1) > 0 \quad\text{and}\quad \sup_{x, y \in \mathbb{R}^n} \mathscr{A}(x, y, -1) < 0.
\end{equation}
Next, we assume that $\mathscr{B}$ satisfies
\begin{equation}\label{eq-B1}
\mathscr{B}(x, u(x)) \in L^1_{\mathrm{loc}}(G) \quad\text{whenever } u \in L^1_{\mathrm{loc}}(G)
\end{equation}
for any open set $G \subset \Omega$, and
\begin{equation}\label{eq-B2}
-(\mathrm{sign}\,z) \mathscr{B}(x, z) \leq f(x) \quad\text{for every } z \neq 0
\end{equation}
for some function $f \in L^{n/s}(\Omega)$. Throughout the paper, we assume \eqref{eq-A1}--\eqref{eq-B2} whenever $\mathscr{A}$ and $\mathscr{B}$ are involved.

\begin{example}\label{examples}
\begin{enumerate}[(i)]
\item
The equation of prescribed nonlocal mean curvature
\begin{equation*}
\mathscr{H}_su=h,
\end{equation*}
with $h \in L^{n/s}(\Omega)$, is a special case of \eqref{eq-main}. The constant nonlocal mean curvature equation corresponds to the case when $h$ is constant. Obviously, the nonlocal mean curvature equation \eqref{eq-NMSE} is covered.
\item
The condition \eqref{eq-B2} in the local case $s=1$ allows us to cover the equation of capillarity
\begin{equation*}
\mathrm{div}\left( \frac{Du}{\sqrt{1+|Du|^2}} \right) = \kappa u,
\end{equation*}
where $\kappa \in \mathbb{R}$ is a constant; see V\'azquez--V\'eron~\cite{VV81}. Similarly, the equation $\mathscr{H}_su=\kappa u$ is covered in the nonlocal framework.
\item
Cozzi--Lombardini~\cite{CL21} considered an even continuous function $g: \mathbb{R} \to (0,1]$ with $\lambda \coloneqq \int_0^\infty tg(t) \,\mathrm{d}t < \infty$ and its first antiderivative $G(t) = \int_0^t g(\tau)\,\mathrm{d}\tau$. Their motivation was to generalize the functions $(1+t^2)^{-(n+1+s)/s}$ and $G_s(t)$, as some of their results hold independently of the geometric structure. Note that $G$ satisfies \eqref{eq-A1}, \eqref{eq-A2}, and \eqref{eq-A3} with $\Lambda \coloneqq \int_0^\infty g(\tau)\,\mathrm{d}\tau \leq 1+\lambda<\infty$.
\end{enumerate}
\end{example}

There are several notions of solutions for \eqref{eq-NMSE} and \eqref{eq-main}. First, the classical (pointwise) solution $u$ of \eqref{eq-NMSE} requires certain regularity; for $\mathscr{H}_su(x)$ to be well-defined, the function $u$ has to be sufficiently regular near $x$ (e.g., $u \in C^{1, \alpha}$ for some $\alpha > s$ in a neighborhood of $x$). Alternatively, one may consider weak solutions of \eqref{eq-main}, defined as follows.

\begin{definition}\label{def-weak}
A measurable function $u: \mathbb{R}^n \to \mathbb{R}$ is a \emph{weak solution} of \eqref{eq-main} in $\Omega$ if
\begin{equation}\label{eq-main-weak}
\int_{\mathbb{R}^n} \int_{\mathbb{R}^n} \mathscr{A} \left( x, y, \frac{u(x)-u(y)}{|x-y|} \right) (\varphi(x)-\varphi(y)) \frac{\mathrm{d}y \,\mathrm{d}x}{|x-y|^{n+s}} + \int_{\Omega} \mathscr{B}(x, u)\varphi \,\mathrm{d}x = 0
\end{equation}
for any $\varphi \in C^\infty_c(\Omega)$.
\end{definition}

In sharp contrast to the weak solutions of the minimal surface equation \eqref{eq-MSE} in $\Omega$, which require to be in $W^{1, 1}_{\mathrm{loc}}(\Omega)$, Definition~\ref{def-weak} does not require any regularity on the weak solutions aside from measurability. However, it is still important to study weak solutions in the natural fractional Sobolev space $W^{s, 1}_{\mathrm{loc}}(\Omega)$ due to the following results: Cozzi--Lombardini~\cite[Theorem~1.10]{CL21} proved that $u \in W^{s, 1}_{\mathrm{loc}}(\Omega)$ is a weak solution of \eqref{eq-NMSE} in $\Omega$ if and only if $\partial\mathcal{S}_u$ is a nonlocal minimal graph in $\Omega \times \mathbb{R}$. Moreover, they also showed that this is equivalent to other solution concepts, such as pointwise solutions, viscosity solutions, and local minimizers of the corresponding nonlocal area functional. Therefore, it is meaningful to study the following geometric notion of solution, which actually corresponds to the nonlocal minimal graph in the special case $\mathscr{A}(x, y, t)=2G_s$ and $\mathscr{B}=0$.

\begin{definition}
A measurable function $u:\mathbb{R}^n \to \mathbb{R}$ is a \emph{solution} of \eqref{eq-main} in $\Omega$ if $u \in W^{s, 1}_{\mathrm{loc}}(\Omega)$ and $u$ is a weak solution of \eqref{eq-main} in $\Omega$.
\end{definition}

It is an interesting open question whether weak solutions are necessarily of class $W^{s, 1}_{\mathrm{loc}}$, even for \eqref{eq-NMSE}. We thus carefully distinguish between the notions of weak solution and solution throughout this work.

Our first main theorem is the removable singularity theorem for \eqref{eq-main}. Notably, unlike the nonlocal nonlinear $p$-Laplacian-type equations ($p >1$), the following theorem requires no assumptions on $u$ near $K$; see \cite[Theorem~1.1]{KL24} for comparison. For the definition of the $(s, 1)$-capacity used in the theorem, refer to Definition~\ref{def-cap}.

\begin{theorem}\label{thm-main}
Let $\Omega \subset \mathbb{R}^n$ be open and let $K \subset \Omega$ be a compact set of $(s, 1)$-capacity zero. If $u$ is a solution of \eqref{eq-main} in $\Omega \setminus K$, then $u$ has a representative that is a solution of \eqref{eq-main} in $\Omega$.
\end{theorem}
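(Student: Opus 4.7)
The strategy is a cutoff argument keyed to the capacity hypothesis. By the definition of $(s,1)$-capacity, there exist $\psi_\varepsilon\in C^\infty_c(\Omega)$ with $0\leq\psi_\varepsilon\leq 1$, $\psi_\varepsilon\equiv 1$ in an open neighborhood of $K$, and $[\psi_\varepsilon]_{W^{s,1}(\mathbb{R}^n)}\to 0$ as $\varepsilon\to 0$. For any $\varphi\in C^\infty_c(\Omega)$, the product $\varphi(1-\psi_\varepsilon)$ is compactly supported away from $K$ and hence admissible in the weak formulation \eqref{eq-main-weak} of $u$ on $\Omega\setminus K$. Splitting
\[
\varphi(1-\psi_\varepsilon)(x)-\varphi(1-\psi_\varepsilon)(y)=\bigl(\varphi(x)-\varphi(y)\bigr)-\bigl(\varphi\psi_\varepsilon(x)-\varphi\psi_\varepsilon(y)\bigr),
\]
the claim \eqref{eq-main-weak} on $\Omega$ reduces to the vanishing, as $\varepsilon\to 0$, of the nonlocal error $R^{(1)}_\varepsilon$ produced by the second bracket and of the local error $R^{(2)}_\varepsilon\coloneqq \int_\Omega\mathscr{B}(x,u)\varphi\psi_\varepsilon\,\mathrm{d}x$.

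The nonlocal error is handled mechanically by the uniform bound~\eqref{eq-A2}: $|R^{(1)}_\varepsilon|\leq \Lambda[\varphi\psi_\varepsilon]_{W^{s,1}(\mathbb{R}^n)}$, and a straightforward product estimate together with the fractional Sobolev embedding $W^{s,1}\hookrightarrow L^{n/(n-s)}$ (applied to control $\|\psi_\varepsilon\|_{L^{n/(n-s)}(\mathrm{supp}\,\varphi)}$) reduces everything to $[\psi_\varepsilon]_{W^{s,1}}\to 0$. For $R^{(2)}_\varepsilon$, since $\psi_\varepsilon\to 0$ a.e.\ on $\Omega\setminus K$ (along a subsequence, via the same embedding), dominated convergence yields $R^{(2)}_\varepsilon\to 0$ as soon as $\mathscr{B}(x,u)\varphi\in L^1(\Omega)$, which by~\eqref{eq-B1} reduces to the claim $u\in L^1_{\mathrm{loc}}(\Omega)$.

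The main technical obstacle is precisely this local integrability across $K$. To secure it, I would exploit the sign condition~\eqref{eq-B2} jointly with the monotonicity~\eqref{eq-A1}: after antisymmetrizing $\mathscr{A}$ (without loss of generality, replacing it by $\frac{1}{2}[\mathscr{A}(x,y,t)-\mathscr{A}(y,x,-t)]$), test the weak formulation on $\Omega\setminus K$ against a smooth approximation of $T_k(u)\eta$, where $T_k$ is the usual truncation at height $k$ and $\eta\in C^\infty_c(\Omega\setminus K)$ is non-negative. The nonlocal integrand becomes pointwise non-negative by monotonicity (the factors $\mathscr{A}(x,y,(u(x)-u(y))/|x-y|)$ and $T_k(u(x))-T_k(u(y))$ share a sign), while the local term is controlled by $\int f\eta\,\mathrm{d}x$ via~\eqref{eq-B2} and $f\in L^{n/s}(\Omega)$. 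Letting $\eta$ swell up to absorb a neighborhood of $K$ and sending $k\to\infty$ produces uniform $L^1$ bounds for $u$ on compact neighborhoods of $K$. With $u\in L^1_{\mathrm{loc}}(\Omega)$ in hand, dominated convergence delivers $R^{(2)}_\varepsilon\to 0$, so $u$ is a weak solution in $\Omega$; the stronger $W^{s,1}_{\mathrm{loc}}(\Omega)$-regularity --- needed to upgrade from \emph{weak solution} to \emph{solution} --- follows by a parallel cutoff argument applied now to the $W^{s,1}$-seminorm, using $[\psi_\varepsilon]_{W^{s,1}}\to 0$ together with the Lebesgue measure-zero property of $K$ to identify $[u]_{W^{s,1}(U)}$ with the integral over $(U\setminus K)^2$ and to rule out blow-up there.
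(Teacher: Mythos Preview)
Your closing approximation --- testing with $\varphi(1-\psi_\varepsilon)$ and letting $\varepsilon\to 0$ --- is exactly the paper's final step, and your treatment of $R^{(1)}_\varepsilon$ via \eqref{eq-A2} is fine. The genuine gap is in the a priori integrability and $W^{s,1}$-regularity of $u$ across $K$. Testing against $T_k(u)\eta$ does produce a non-negative principal term, but every remaining term scales like $k$: the cross term by $\Lambda k[\eta]_{W^{s,1}}$ and the $\mathscr{B}$-term by $k\|f\eta\|_{L^1}$ (since $|T_k(u)|\leq k$). The resulting inequality says only that $[T_k(u)]_{W^{s,1}}=O(k)$, which yields nothing as $k\to\infty$. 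Moreover, $\eta\in C^\infty_c(\Omega\setminus K)$ cannot ``swell up to absorb a neighborhood of $K$''; if you intend $\eta=\zeta(1-\psi_\varepsilon)$ with $\varepsilon\to 0$, the $O(k)$ obstruction persists. Your $W^{s,1}_{\mathrm{loc}}$ paragraph has the same defect: identifying $[u]_{W^{s,1}(U)}$ with the integral over $(U\setminus K)^2$ is correct since $|K|=0$, but $U\setminus K$ is \emph{not} compactly contained in $\Omega\setminus K$, so the hypothesis $u\in W^{s,1}_{\mathrm{loc}}(\Omega\setminus K)$ gives no bound there.

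The paper's mechanism (Lemmas~\ref{lem-step1}--\ref{lem-step2}) is considerably more delicate. For integrability it uses the \emph{double} truncation $\bar u=\min\{\max\{u,k\},l\}$: a Caccioppoli estimate plus the fractional Sobolev inequality give $\|\bar u-k\|_{L^{1^\ast_s}(\Omega')}\leq C\|\bar u-k\|_{L^1(\Omega')}+C\|\bar u-k\|_{L^1(G\setminus\Omega')}+C$. Because $\bar u-k$ is supported on $\{u>k\}$, H\"older yields $\|\bar u-k\|_{L^1(\Omega')}\leq |\Omega'\cap\{u>k\}|^{s/n}\|\bar u-k\|_{L^{1^\ast_s}(\Omega')}$, which is absorbed on the left once $k$ is large; the remaining term lives on the annulus $G\setminus\Omega'\Subset\Omega\setminus K$, where $u$ is smooth by the regularity theory, and only then does $l\to\infty$ close the estimate. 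For $W^{s,1}_{\mathrm{loc}}$, the paper first runs a Moser iteration on powers $\bar v^\beta$ to reach $L^\infty_{\mathrm{loc}}$, and deduces the seminorm bound from boundedness. Neither the lower-truncation absorption at large $k$ nor the iteration to $L^\infty$ appears in your outline, and I do not see how to bypass them.
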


As mentioned above, in the special case where $\mathscr{A}(x, y, t)=2G_s$ and $\mathscr{B}=0$, any solution of \eqref{eq-NMSE} in $\Omega$ is actually in $C^\infty(\Omega)$ by the regularity theory for the nonlocal minimal graphs. However, to the best of the author's knowledge, a corresponding regularity theory for the more general equation \eqref{eq-main} has yet to be established. It is therefore presently unknown whether $u$ in Theorem~\ref{thm-main} has a continuous representative that is a solution of \eqref{eq-main} in $\Omega$.

A special case of Theorem~\ref{thm-main} for the equation \eqref{eq-NMSE} can be interpreted in terms of nonlocal minimal graphs as follows: \vspace{0.2cm}\\ 
\emph{Let $\Omega \subset \mathbb{R}^n$ be open and let $K \subset \Omega$ be a compact set of $(s, 1)$-capacity zero. Then, any nonlocal minimal graph in $\Omega \setminus K$ is indeed a nonlocal minimal graph in $\Omega$.} \vspace{0.2cm}

The proof of Theorem~\ref{thm-main} proceeds in three steps. In the first step, we show that the solution $u$ attains $W^{s, 1}_{\mathrm{loc}}(\Omega)$-regularity under an additional integrability assumption on $u$ in a neighborhood of $K$. To achieve this, we adapt the method in \cite{KL24}, where Kim--Lee extended Serrin's approach~\cite{Ser64} to fractional $p$-Laplacian-type equations with $1<p<\infty$. (Notice that the equation \eqref{eq-main} corresponds to the case $p=1$.) This method relies on Caccioppoli-type estimates involving double truncations of $u$ and the Moser iteration technique. Through this iterative process, the integrability of $u$ is improved to $L^\infty_{\mathrm{loc}}(\Omega)$, which in turn ensures $W^{s, 1}_{\mathrm{loc}}(\Omega)$-regularity.

The second step is to prove that the additional integrability condition on $u$ imposed in the first step is, in fact, satisfied by solutions. This requires a new idea, as the classical technique from Serrin~\cite{Ser65} is not directly applicable. In \cite{Ser65}, the double truncations were used again in a crucial way, with truncation levels greater than the maximum of $u$ on the boundary of $\Omega$. (Note that one can assume that $u$ is continuous on $\partial \Omega$ by considering a smaller open set.) This method does not work in our framework due to the nonlocality of the problem. The truncation levels need to exceed the supremum of $u$ on the complement of $\Omega$, but this supremum may not even be finite.

To address this issue, we employ a localization technique. This approach has been widely used for nonlocal equations such as fractional $p$-Laplacian-type equations and typically introduces the so-called nonlocal tail term, which is a weighted $L^{p-1}$-norm of $u$ outside $\Omega$. Since the equation \eqref{eq-main} corresponds to the case $p=1$, it involves a form of `weighted $L^0$-norm' of $u$, which is constant. This enables us to adapt Serrin's proof~\cite{Ser65} using double truncations.

In the final step, we use a standard approximation argument to show that $u$ has a representative that is a solution of \eqref{eq-main} in $\Omega$.

We conclude the introduction with a few remarks. Another possible direction for generalizing the classical result of Bers~\cite{Ber51} involves noncompact set $K$. Specifically, for an open set $\Omega \subset \mathbb{R}^n$ and a relatively closed set $E \subset \Omega$ of $(n-1)$-dimensional Hausdorff measure zero, any continuous weak solution of \eqref{eq-MSE} in $\Omega \setminus E$ extends to a continuous weak solution of the same equation \eqref{eq-MSE} in $\Omega$. This result was first proved by Nitsche~\cite{Nit65} for $n=2$, and the general case in higher dimensions was established by Miranda~\cite{Mir77} and Simon~\cite{Sim77}. See also Lau~\cite{Lau88}.

It remains an intriguing open question whether Theorem~\ref{thm-main} still holds when $K$ approaches the boundary of $\Omega$. This problem appears to be challenging and seems to require new ideas. While we do not address it in this paper, we do provide some insight for weak solutions in Theorem~\ref{thm-main-weak}.

The author would like to thank the anonymous referees for their careful reading of the manuscript and for providing helpful comments.

This article is organized as follows. After presenting some preliminary material in Section~\ref{sec-preliminaries}, we provide Caccioppoli-type estimates involving the double truncations in Section~\ref{sec-Caccio}. The main theorem, Theorem~\ref{thm-main}, is proved in Section~\ref{sec-RST}. Finally, the last section, Section~\ref{sec-weak}, is devoted to the removable singularity theorem for weak solutions.

\section{preliminaries}\label{sec-preliminaries}

In this section, we study basic properties of solutions of \eqref{eq-NMSE} and \eqref{eq-main}, review the fractional Sobolev inequality, and collect several facts about the $(s, 1)$-capacity.

We begin with the following proposition, which shows that the equations \eqref{eq-NMSE} and \eqref{eq-main} allow a larger class of test functions than $C^\infty_c(\Omega)$.

\begin{proposition}\label{prop-test}
Let $u: \mathbb{R}^n \to \mathbb{R}$ be a measurable function such that $u \in W^{s, 1}_{\mathrm{loc}}(\Omega)$. Then, $u$ is a solution of \eqref{eq-main} in $\Omega$ if and only if \eqref{eq-main-weak} holds for all $\varphi \in W^{s, 1}_{\mathrm{loc}}(\Omega) \cap L^\infty_{\mathrm{loc}}(\Omega)$ with $\mathrm{supp}\,\varphi \Subset \Omega$.
\end{proposition}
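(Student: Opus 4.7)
The plan is to prove the equivalence in two directions. The ``if'' direction is immediate: since $C^\infty_c(\Omega) \subset W^{s,1}_{\mathrm{loc}}(\Omega) \cap L^\infty_{\mathrm{loc}}(\Omega)$ and every $\varphi \in C^\infty_c(\Omega)$ has compact support in $\Omega$, validity of \eqref{eq-main-weak} on the larger class immediately implies Definition~3. The substantive direction is the other one, which I would prove by a mollification-and-dominated-convergence argument.

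For the nontrivial direction, assume $u$ is a solution and let $\varphi \in W^{s,1}_{\mathrm{loc}}(\Omega) \cap L^\infty_{\mathrm{loc}}(\Omega)$ with $\mathrm{supp}\,\varphi \Subset \Omega$. I would fix two open sets $G_0$ and $G$ with $\mathrm{supp}\,\varphi \subset G_0 \Subset G \Subset \Omega$, pick a standard mollifier $\eta_\epsilon$, and set $\varphi_\epsilon := \varphi * \eta_\epsilon$. For $\epsilon < \mathrm{dist}(G_0, \partial G)/2$ one has $\varphi_\epsilon \in C^\infty_c(G) \subset C^\infty_c(\Omega)$, $\|\varphi_\epsilon\|_{L^\infty(\mathbb{R}^n)} \leq \|\varphi\|_{L^\infty(\mathbb{R}^n)}$, $\varphi_\epsilon \to \varphi$ pointwise a.e., and $\varphi_\epsilon \to \varphi$ in $W^{s,1}(G)$ (the latter because $\varphi \in W^{s,1}(G)$ since $G \Subset \Omega$). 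Since $u$ is a solution, \eqref{eq-main-weak} is valid for each $\varphi_\epsilon$, and the task reduces to passing to the limit $\epsilon \to 0$ in both the double integral and the $\mathscr{B}$-integral.

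For the $\mathscr{B}$-term, the uniform bound $|\varphi_\epsilon| \leq \|\varphi\|_\infty$, the support condition $\mathrm{supp}\,\varphi_\epsilon \subset G$, the integrability $\mathscr{B}(\cdot,u) \in L^1(G)$ (which follows from $u \in W^{s,1}(G) \subset L^1(G)$ together with \eqref{eq-B1}), and pointwise convergence deliver dominated convergence. For the double integral I would split $\mathbb{R}^n \times \mathbb{R}^n$ into $G \times G$, $G \times G^c$, $G^c \times G$, and $G^c \times G^c$; the last piece is zero since $\varphi_\epsilon$ and $\varphi$ both vanish on $G^c$. On $G \times G$, using \eqref{eq-A2},
\begin{equation*}
\left| \int_G \!\!\int_G \mathscr{A}(\cdot)\bigl[(\varphi_\epsilon - \varphi)(x) - (\varphi_\epsilon - \varphi)(y)\bigr] \frac{\mathrm{d}y\,\mathrm{d}x}{|x-y|^{n+s}} \right| \leq \Lambda\, [\varphi_\epsilon - \varphi]_{W^{s,1}(G)} \to 0.
\end{equation*}
On $G \times G^c$ (and symmetrically $G^c \times G$), only the $x$-values in $G_0$ contribute, and the kernel
\begin{equation*}
I(x) := \int_{G^c} \frac{\mathrm{d}y}{|x-y|^{n+s}}
\end{equation*}
is uniformly bounded for $x \in G_0$ because $\mathrm{dist}(G_0, G^c) > 0$ and the tail at infinity is integrable; combining with \eqref{eq-A2} and $\|\varphi_\epsilon - \varphi\|_{L^1(G_0)} \to 0$ (which follows from a.e.\ convergence plus the $L^\infty$ bound) gives the desired convergence.

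The only mildly delicate point is the handling of the cross region $G \times G^c$: one needs both the separation of supports (so the kernel is bounded) and the uniform $L^\infty$ bound on $\varphi_\epsilon$ to convert a priori non-integrable tails into finite, manageable quantities. Everything else is routine dominated convergence driven by the boundedness assumption \eqref{eq-A2}, which plays the same role here as the linear growth of the ``flux'' does in the classical minimal surface equation.
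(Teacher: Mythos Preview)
Your proof is correct and follows essentially the same mollification-and-limit strategy as the paper: approximate $\varphi$ by smooth compactly supported functions, use the boundedness \eqref{eq-A2} to control the bilinear form via the $W^{s,1}$ seminorm on the inner region and via the separation of supports on the cross region, and handle the $\mathscr{B}$-term using \eqref{eq-B1}. One cosmetic slip: on $G\times G^c$ the support of $\varphi_\epsilon-\varphi$ lies in an $\epsilon$-neighbourhood of $G_0$ rather than in $G_0$ itself, but your choice $\epsilon<\tfrac12\,\mathrm{dist}(G_0,\partial G)$ already guarantees this neighbourhood stays a fixed positive distance from $G^c$, so the kernel bound goes through unchanged.
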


\begin{proof}
One implication is trivial. Suppose that $\varphi \in W^{s, 1}_{\mathrm{loc}}(\Omega) \cap L^\infty_{\mathrm{loc}}(\Omega)$ and that $\mathrm{supp}\,\varphi \Subset \Omega$. Let $\Omega_1$ and $\Omega_2$ be open sets such that $\mathrm{supp}\,\varphi \Subset \Omega_2 \Subset \Omega_1 \Subset \Omega$. By mollification, there are functions $\varphi_j \in C^\infty_c(\Omega_2)$ such that $\varphi_j \to \varphi$ in $W^{s, 1}(\Omega_1)$ and in $L^\infty(\Omega_1)$ as $j \to \infty$. Then, using \eqref{eq-A2}, we obtain that
\begin{align*}
&\left| \int_{\mathbb{R}^n} \int_{\mathbb{R}^n} \mathscr{A} \left( x, y, \frac{u(x)-u(y)}{|x-y|} \right) ((\varphi_j-\varphi)(x)-(\varphi_j-\varphi)(y)) \frac{\mathrm{d}y \,\mathrm{d}x}{|x-y|^{n+s}} \right| \\
&\leq \Lambda [\varphi_j-\varphi]_{W^{s, 1}(\Omega_1)} + 2\Lambda \int_{\Omega_1} \int_{\mathbb{R}^n \setminus \Omega_1} \frac{|\varphi_j(x)-\varphi(x)|}{|x-y|^{n+s}} \,\mathrm{d}y \,\mathrm{d}x \\
&\leq \Lambda [\varphi_j-\varphi]_{W^{s, 1}(\Omega_1)} + 2\Lambda \int_{\Omega_2} |\varphi_j(x)-\varphi(x)| \int_{\mathbb{R}^n \setminus B_d(x)} \frac{\mathrm{d}y}{|x-y|^{n+s}} \,\mathrm{d}x \\
&\leq \Lambda [\varphi_j-\varphi]_{W^{s, 1}(\Omega_1)} + 2\Lambda \frac{|\mathbb{S}^{n-1}|}{sd^s} \|\varphi_j-\varphi\|_{L^1(\Omega_2)},
\end{align*}
where $d=\mathrm{dist}(\Omega_2, \Omega_1^c)>0$, and that
\begin{equation*}
\left| \int_{\Omega} \mathscr{B}(x, u)(\varphi_j-\varphi) \,\mathrm{d}x \right| \leq \|\varphi_j-\varphi\|_{L^\infty(\Omega_2)} \int_{\Omega_2} |\mathscr{B}(x, u)| \,\mathrm{d}x.
\end{equation*}
Notice that $\mathscr{B}(x, u) \in L^1(\Omega_2)$ by the assumption \eqref{eq-B1}. Therefore, the weak formulation \eqref{eq-main-weak} follows from \eqref{eq-main-weak} with $\varphi_j$ in place of $\varphi$, by taking the limit as $j \to \infty$.
\end{proof}

One of the key tools in our analysis is the fractional Sobolev inequality. Numerous variations of fractional Sobolev inequalities can be found in the literature (see, for instance, Maz'ya--Shaposhnikova~\cite[Theorem~1]{MS02} and Di Nezza--Palatucci--Valdinoci~\cite[Theorem~6.7]{DNPV12}), but the following form will be particularly useful in this work.

\begin{theorem}\label{thm-FSI}
Let $G \subset \mathbb{R}^n$ be a bounded open set with a Lipschitz boundary. Then, there exists a constant $C=C(n, s, G)>0$ such that
\begin{equation*}
\|u\|_{L^{1^{\ast}_s}(G)} \leq C \|u\|_{W^{s, 1}(G)}
\end{equation*}
for any $u \in W^{s, 1}(G)$, where
\begin{equation}\label{eq-1-ast}
1^{\ast}_s=\frac{n}{n-s}.
\end{equation}
In particular, there exists a constant $C=C(n, s)>0$ such that
\begin{equation*}
\|u\|_{L^{1^{\ast}_s}(B_r(x_0))} \leq C \left( [u]_{W^{s, 1}(B_r(x_0))} + r^{-s} \|u\|_{L^1(B_r(x_0))} \right)
\end{equation*}
for any $u \in W^{s, 1}(B_r(x_0))$.
\end{theorem}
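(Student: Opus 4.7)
The plan is to reduce the second (scale-invariant) inequality to the first via dilation and to take the first as essentially classical.

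For the first inequality, my approach is to combine an extension theorem with the global fractional Sobolev inequality. Since $G$ has Lipschitz boundary, there is a bounded linear extension $E\colon W^{s,1}(G)\to W^{s,1}(\mathbb{R}^n)$ whose image is supported in a fixed bounded neighborhood of $G$, with $\|Eu\|_{W^{s,1}(\mathbb{R}^n)}\le C(n,s,G)\|u\|_{W^{s,1}(G)}$. Then the global inequality $\|v\|_{L^{1^{\ast}_s}(\mathbb{R}^n)}\le C(n,s)[v]_{W^{s,1}(\mathbb{R}^n)}$ for compactly supported $v\in W^{s,1}(\mathbb{R}^n)$, due to Maz'ya--Shaposhnikova~\cite{MS02}, yields the claim. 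Alternatively, one may invoke \cite[Theorem~6.7]{DNPV12} directly.

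For the second inequality, I would rescale. Setting $v(x)\coloneqq u(x_0+rx)$ for $x\in B_1$, a straightforward change of variables gives
\[
\|u\|_{L^{1^{\ast}_s}(B_r(x_0))}=r^{n-s}\|v\|_{L^{1^{\ast}_s}(B_1)},\quad [u]_{W^{s,1}(B_r(x_0))}=r^{n-s}[v]_{W^{s,1}(B_1)},\quad \|u\|_{L^1(B_r(x_0))}=r^n\|v\|_{L^1(B_1)},
\]
using $n/1^{\ast}_s=n-s$. Applying the first inequality to $v$ on the fixed Lipschitz domain $B_1$ with constant $C(n,s)$ and translating back produces the claimed estimate, with the factor $r^{-s}$ arising from the mismatch $n-(n-s)=s$ between the scaling exponents of $\|\cdot\|_{L^{1^{\ast}_s}}$ and $\|\cdot\|_{L^1}$.

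The only nontrivial ingredient is the global embedding $W^{s,1}(\mathbb{R}^n)\hookrightarrow L^{n/(n-s)}(\mathbb{R}^n)$ for compactly supported functions. Once this is accepted, both the Lipschitz-domain statement (via extension) and the ball statement (via scaling) follow by routine arguments, so no genuine obstacle is expected.
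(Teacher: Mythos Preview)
The paper does not actually prove Theorem~\ref{thm-FSI}; it is stated as a known tool, with references to Maz'ya--Shaposhnikova~\cite[Theorem~1]{MS02} and Di~Nezza--Palatucci--Valdinoci~\cite[Theorem~6.7]{DNPV12} given in the surrounding text. So there is nothing in the paper to compare your argument against line by line.

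That said, your sketch is a correct and standard derivation. The first part (Lipschitz domain) is precisely how \cite[Theorem~6.7]{DNPV12} proceeds: extend to $W^{s,1}(\mathbb{R}^n)$ via the Lipschitz extension operator and apply the global embedding $\|v\|_{L^{1^\ast_s}(\mathbb{R}^n)}\le C[v]_{W^{s,1}(\mathbb{R}^n)}$. Your scaling computation for the ball case is also right; the identities $\|u\|_{L^{1^\ast_s}(B_r(x_0))}=r^{n-s}\|v\|_{L^{1^\ast_s}(B_1)}$, $[u]_{W^{s,1}(B_r(x_0))}=r^{n-s}[v]_{W^{s,1}(B_1)}$, and $\|u\|_{L^1(B_r(x_0))}=r^n\|v\|_{L^1(B_1)}$ hold exactly as you wrote, and substituting them into the $B_1$ inequality produces the $r^{-s}$ factor in front of the $L^1$ term. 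No gap here.
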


The remainder of this section is devoted to the definition and some properties of the $(s, 1)$-capacity. While the main theorems in the introduction require the $(s, 1)$-capacity only for compacts sets, we will need it for general sets in the sequel. Thus, we define it for general sets here.

\begin{definition}\label{def-cap}
Let $\Omega \subset \mathbb{R}^n$ be open. The \emph{$(s, 1)$-capacity} of a compact set $K \subset \Omega$ is defined by
\begin{equation*}
\mathrm{cap}_{s, 1}(K, \Omega) \coloneqq \inf_{u} {[u]_{W^{s, 1}(\mathbb{R}^n)}},
\end{equation*}
where the infimum is taken over all $u \in C^\infty_c(\Omega)$ such that $u \geq 1$ on $K$. For open sets $G \subset \Omega$,
\begin{equation*}
\mathrm{cap}_{s, 1}(G,\Omega) \coloneqq \sup_{\substack{K \text{ compact}\\ K \subset G}} \mathrm{cap}_{s, 1}(K,\Omega),
\end{equation*}
and for arbitrary sets $E \subset \Omega$,
\begin{equation*}
\mathrm{cap}_{s, 1}(E,\Omega) \coloneqq \inf_{\substack{G \text{ open}\\ E \subset G \subset \Omega}} \mathrm{cap}_{s, 1}(G,\Omega).
\end{equation*}
\end{definition}

The $(s, 1)$-capacity is simply the $(s, p)$-capacity with $p=1$. However, the study of $(s, p)$-capacity in the literature has primarily been restricted to the case $p>1$; see, for instance, Bj\"orn--Bj\"orn--Kim~\cite[Section~5]{BBK24} and Kim--Lee~\cite[Section~2.3]{KL24}. Nevertheless, most of the their results remain valid even for $p=1$ without modification of the proofs. We summarize these results below.

\begin{lemma}\label{lem-cap}\cite[Lemma~2.17]{KL24}
Let $E \subset \mathbb{R}^n$. Then,
\begin{equation*}
\mathrm{cap}_{s, 1}(E, \mathbb{R}^n) = \inf_{u} {[u]_{W^{s, 1}(\mathbb{R}^n)}},
\end{equation*}
where the infimum is taken over all $u \in W^{s, 1}(\mathbb{R}^n)$ such that $u=1$ in a neighborhood of $E$ and $0 \leq u \leq 1$ everywhere.
\end{lemma}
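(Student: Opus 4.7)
The plan is to establish the identity in three stages matching the nested definition of $(s,1)$-capacity: compact sets, open sets, and then arbitrary sets. Write $\mathcal{N}(E)$ for the right-hand side infimum and $\mathcal{A}(E) = \{u \in W^{s,1}(\mathbb{R}^n) : 0 \le u \le 1,\ u \equiv 1 \text{ on a neighborhood of } E\}$ for the admissible class.

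For a compact set $K$, the bound $\mathcal{N}(K) \le \mathrm{cap}_{s,1}(K,\mathbb{R}^n)$ would come from the truncation $\tilde\varphi := \min(((1+\varepsilon)\varphi)_+, 1)$ applied to any $\varphi \in C^\infty_c(\mathbb{R}^n)$ with $\varphi \ge 1$ on $K$: by continuity $(1+\varepsilon)\varphi > 1$ on an open neighborhood of $K$, so $\tilde\varphi \in \mathcal{A}(K)$, and composition with a $(1+\varepsilon)$-Lipschitz map yields $[\tilde\varphi]_{W^{s,1}} \le (1+\varepsilon)[\varphi]_{W^{s,1}}$, after which $\varepsilon \to 0$ concludes. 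Conversely, given $u \in \mathcal{A}(K)$ equal to $1$ on a neighborhood $V$ of $K$, I would mollify to $u_\delta := u \ast \rho_\delta$ (for $\delta$ small enough that $u_\delta \equiv 1$ still on a neighborhood of $K$ contained in $V$) and multiply by a radial cutoff $\eta_R$ equal to $1$ on a large ball containing $K$. Convolution does not increase the Gagliardo seminorm by Minkowski's inequality, and a direct computation bounds the commutator term
\begin{equation*}
\int_{\mathbb{R}^n} |u_\delta(y)| \int_{\mathbb{R}^n} \frac{|\eta_R(x)-\eta_R(y)|}{|x-y|^{n+s}} \,\mathrm{d}x\,\mathrm{d}y \le C R^{-s}\|u\|_{L^1(\mathbb{R}^n)},
\end{equation*}
so the $C^\infty_c$ competitor $u_\delta \eta_R$ has seminorm converging to $[u]_{W^{s,1}}$ as $\delta \to 0$ and $R \to \infty$.

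For an open set $G$, the inclusion $\mathcal{A}(G) \subset \mathcal{A}(K)$ for every compact $K \subset G$ (since $G$ itself is a neighborhood of $K$), combined with the compact case, immediately gives $\mathrm{cap}_{s,1}(G,\mathbb{R}^n) = \sup_K \mathcal{N}(K) \le \mathcal{N}(G)$. For the reverse, I would assume $\mathrm{cap}_{s,1}(G,\mathbb{R}^n) < \infty$, take an exhaustion $K_j \nearrow G$, and pick near-minimizers $u_j \in \mathcal{A}(K_j)$ with $[u_j]_{W^{s,1}} \le \mathrm{cap}_{s,1}(G,\mathbb{R}^n) + 1/j$. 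Since $0 \le u_j \le 1$ with bounded Gagliardo seminorm, Rellich compactness would yield a subsequence converging in $L^1_{\mathrm{loc}}$ and almost everywhere to a limit $u$. Because $u_j \equiv 1$ near $K_j$ and $K_j \nearrow G$, the limit equals $1$ almost everywhere on $G$, while Fatou's lemma on the Gagliardo integrand gives $[u]_{W^{s,1}} \le \liminf_j [u_j]_{W^{s,1}} \le \mathrm{cap}_{s,1}(G,\mathbb{R}^n)$; redefining $u \equiv 1$ on $G$ on a null set produces an element of $\mathcal{A}(G)$. Finally, for arbitrary $E$, both directions reduce to the open case: $\mathcal{A}(G) \subset \mathcal{A}(E)$ for any open $G \supset E$, and any $u \in \mathcal{A}(E)$ lies in $\mathcal{A}(V)$ for its witnessing open set $V \supset E$, so $\mathcal{N}(E) = \inf_G \mathcal{N}(G) = \inf_G \mathrm{cap}_{s,1}(G,\mathbb{R}^n) = \mathrm{cap}_{s,1}(E,\mathbb{R}^n)$.

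The main obstacle is the open-set stage: ensuring that the limit $u$ of the approximants genuinely belongs to $W^{s,1}(\mathbb{R}^n)$ (so in particular $u \in L^1$, not just $L^{1^{\ast}_s}$ coming from the fractional Sobolev embedding), and that the almost-everywhere condition $u = 1$ on $G$ can be upgraded, via a null modification, to the pointwise condition required by $\mathcal{A}(G)$. Finiteness of $\mathrm{cap}_{s,1}(G,\mathbb{R}^n)$ forces $|G| < \infty$ through the fractional Sobolev inequality applied to the competitors, which suggests that a further tail truncation before extracting the limit should recover the $L^1$-integrability.
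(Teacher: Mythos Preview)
The paper does not supply its own proof of this lemma; it is quoted from \cite[Lemma~2.17]{KL24} with the blanket remark that the $(s,p)$-capacity arguments there carry over to $p=1$ without modification. So there is nothing in-paper to compare against, and your three-stage scheme (compact $\to$ open $\to$ arbitrary) is the standard route for such identities.

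Your compact-set and arbitrary-set stages are correct as written. In the open-set stage you have located the only real difficulty, but the fix you sketch does not close it. A spatial tail cutoff $u_j\mapsto u_j\eta_R$ contributes a commutator of size $C R^{-s}\|u_j\|_{L^1(\mathbb{R}^n)}$, and nothing in your construction bounds $\|u_j\|_{L^1}$ uniformly in $j$; the homogeneous fractional Sobolev inequality only controls $\|u_j\|_{L^{1^{\ast}_s}}$, and for functions with values in $[0,1]$ this goes the wrong way ($\|u_j\|_{L^{1^{\ast}_s}}^{1^{\ast}_s}\le\|u_j\|_{L^1}$). The repair that works is a \emph{level} truncation of the limit rather than a spatial truncation of the approximants. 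Your Fatou/Rellich argument produces a function $u$ with $0\le u\le 1$, $u=1$ a.e.\ on $G$, $[u]_{W^{s,1}(\mathbb{R}^n)}\le\mathrm{cap}_{s,1}(G,\mathbb{R}^n)$, and (by Fatou on the $L^{1^{\ast}_s}$ integrand, using the uniform bound $\|u_j\|_{L^{1^{\ast}_s}}\le C[u_j]_{W^{s,1}}$) also $u\in L^{1^{\ast}_s}(\mathbb{R}^n)$. For $t\in(0,1)$ set
\[
u_t=\min\bigl((u-t)_+/(1-t),\,1\bigr).
\]
Then $u_t=1$ exactly where $u=1$, hence a.e.\ on $G$; the support of $u_t$ lies in $\{u>t\}$, which has finite measure by Chebyshev and the $L^{1^{\ast}_s}$ bound, so $u_t\in L^1(\mathbb{R}^n)$; and the $(1-t)^{-1}$-Lipschitz composition gives $[u_t]_{W^{s,1}}\le(1-t)^{-1}[u]_{W^{s,1}}$. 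After the null-set correction on $G$ you already describe, $u_t\in\mathcal{A}(G)$, so $\mathcal{N}(G)\le(1-t)^{-1}\mathrm{cap}_{s,1}(G,\mathbb{R}^n)$, and $t\to 0$ finishes the open case.
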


Before stating the next results, let us define the set of $(s, 1)$-capacity zero, which plays a fundamental role in the main theorems.

\begin{definition}
A set $E \subset \mathbb{R}^n$ is of \emph{$(s, 1)$-capacity zero} if $\mathrm{cap}_{s, 1}(E \cap \Omega, \Omega)=0$ for all open sets $\Omega \subset \mathbb{R}^n$.
\end{definition}

The next result will be used frequently throughout the paper.

\begin{lemma}\label{lem-cap-zero}
Let $E \subset \mathbb{R}^n$ be of $(s, 1)$-capacity zero. Then, there exists a sequence of functions $\bar{\eta}_j \in W^{s, 1}(\mathbb{R}^n)$ such that $\bar{\eta}_j=0$ in a neighborhood of $E$, $0 \leq \bar{\eta}_j \leq 1$ in $\mathbb{R}^n$, $\lim_{j\to \infty} [\bar{\eta}_j]_{W^{s, 1}(\mathbb{R}^n)} = 0$, and $\bar{\eta}_j \to 1$ a.e.\ in $\mathbb{R}^n$ as $j \to \infty$.
\end{lemma}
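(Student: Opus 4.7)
The plan is to use Lemma~\ref{lem-cap} to extract a near-minimizing sequence for the $(s,1)$-capacity of $E$ in $\mathbb{R}^n$, and then simply take $\bar{\eta}_j := 1 - u_j$. Three of the four claimed properties will be immediate from the construction; the only delicate point is the almost-everywhere convergence, which I would obtain via a fractional Sobolev embedding and a subsequence argument.

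\textbf{Step 1 (extract an approximating sequence).} Taking $\Omega = \mathbb{R}^n$ in the definition of $(s,1)$-capacity zero yields $\mathrm{cap}_{s,1}(E, \mathbb{R}^n)=0$. By Lemma~\ref{lem-cap}, for each $j \in \mathbb{N}$ there exists $u_j \in W^{s,1}(\mathbb{R}^n)$ with $u_j \equiv 1$ on an open set $V_j \supset E$, $0 \leq u_j \leq 1$ on $\mathbb{R}^n$, and $[u_j]_{W^{s,1}(\mathbb{R}^n)} < 2^{-j}$.

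\textbf{Step 2 (three of the four properties).} Set $\bar{\eta}_j := 1 - u_j$. Then $\bar{\eta}_j = 0$ on the neighborhood $V_j$ of $E$, $0 \leq \bar{\eta}_j \leq 1$, and
\[
[\bar{\eta}_j]_{W^{s,1}(\mathbb{R}^n)} = [u_j]_{W^{s,1}(\mathbb{R}^n)} < 2^{-j} \longrightarrow 0.
\]

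\textbf{Step 3 (a.e.\ convergence).} The remaining claim $\bar{\eta}_j \to 1$ a.e.\ is equivalent to $u_j \to 0$ a.e.\ on $\mathbb{R}^n$. I would deduce this from the homogeneous fractional Sobolev inequality
\[
\|u_j\|_{L^{1^{\ast}_s}(\mathbb{R}^n)} \leq C(n,s)\,[u_j]_{W^{s,1}(\mathbb{R}^n)} < C\cdot 2^{-j},
\]
which applies because $u_j \in L^1(\mathbb{R}^n)\cap L^\infty(\mathbb{R}^n)$ with finite fractional seminorm. Hence $u_j \to 0$ in $L^{1^{\ast}_s}(\mathbb{R}^n)$, and after passing to a subsequence (which I relabel as $u_j$, and correspondingly $\bar{\eta}_j$), we get $u_j \to 0$ almost everywhere.

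The main obstacle is justifying the \emph{global} Sobolev embedding on $\mathbb{R}^n$, since the local form in Theorem~\ref{thm-FSI} would only give
\[
\|u_j\|_{L^{1^{\ast}_s}(B_r)} \leq C\bigl([u_j]_{W^{s,1}(B_r)} + r^{-s}\|u_j\|_{L^1(B_r)}\bigr),
\]
and the $L^1$ term on a ball containing part of $V_j$ is \emph{not} small. If one prefers not to appeal to the global inequality directly, an alternative is to exhaust $\mathbb{R}^n$ by balls $B_{R_m}$ and, for each $m$, approximate the capacity of $E \cap B_{R_m}$ in a slightly larger open set by $C^\infty_c$ functions, to which the Sobolev–Poincaré inequality for compactly supported test functions applies with constant depending only on $n,s$; a standard diagonal extraction then delivers the required sequence on all of $\mathbb{R}^n$.
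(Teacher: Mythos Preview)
Your approach is correct and matches the paper's: invoke Lemma~\ref{lem-cap}, set $\bar{\eta}_j = 1 - u_j$, and pass to a subsequence for the a.e.\ convergence (the paper simply asserts this last step without justification). Your concern about the global embedding is not a real obstacle, since each $u_j$ lies in $W^{s,1}(\mathbb{R}^n)$ and $C^\infty_c(\mathbb{R}^n)$ is dense there, so the homogeneous inequality $\|u_j\|_{L^{1^\ast_s}(\mathbb{R}^n)} \le C[u_j]_{W^{s,1}(\mathbb{R}^n)}$ carries over by approximation and Fatou.
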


\begin{proof}
Lemma~\ref{lem-cap} shows that there exists a sequence of functions $\eta_j \in W^{s, 1}(\mathbb{R}^n)$ such that $\eta_j=1$ in a neighborhood of $E$, $0 \leq \eta_j \leq 1$ in $\mathbb{R}^n$, and $[\eta_j]_{W^{s, 1}(\mathbb{R}^n)} \to 0$ as $j \to \infty$. Note that we may assume that $\eta_j \to 0$ a.e.\ in $\mathbb{R}^n$ as $j \to \infty$ by taking a subsequence if necessary. Then, the functions $\bar{\eta}_j=1-\eta_j$ have the desired properties.
\end{proof}

We close this section with the following result, which shows that sets of $(s, 1)$-capacity zero have measure zero.

\begin{lemma}\label{lem-measure}\cite[Lemma~2.15]{KL24}
If $E \subset \mathbb{R}^n$ is of $(s, 1)$-capacity zero, then $|E|=0$.
\end{lemma}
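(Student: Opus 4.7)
The plan is to extract from the capacity-zero assumption a sequence of $W^{s,1}(\mathbb{R}^n)$-functions that are identically $1$ on $E$ and whose seminorms vanish, upgrade the seminorm vanishing to pointwise a.e.\ convergence via the fractional Sobolev inequality, and conclude that $E$ must sit inside a Lebesgue null set.

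Concretely, first I would take $\Omega = \mathbb{R}^n$ in the definition of $(s,1)$-capacity zero to obtain $\mathrm{cap}_{s,1}(E,\mathbb{R}^n) = 0$. Lemma \ref{lem-cap} then produces a sequence $\eta_j \in W^{s,1}(\mathbb{R}^n)$ with $\eta_j \equiv 1$ on an open neighborhood $U_j$ of $E$, $0 \leq \eta_j \leq 1$ everywhere, and $[\eta_j]_{W^{s,1}(\mathbb{R}^n)} \to 0$. Since each $\eta_j$ lies in $L^1(\mathbb{R}^n) \cap L^\infty(\mathbb{R}^n) \subset L^{1^{\ast}_s}(\mathbb{R}^n)$, the global fractional Gagliardo--Sobolev inequality applies and gives
\[
\|\eta_j\|_{L^{1^{\ast}_s}(\mathbb{R}^n)} \leq C(n,s)\,[\eta_j]_{W^{s,1}(\mathbb{R}^n)} \longrightarrow 0.
\]
Passing to a subsequence, $\eta_j \to 0$ almost everywhere in $\mathbb{R}^n$.

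The conclusion is then immediate. Let $N := \{x \in \mathbb{R}^n : \eta_j(x) \not\to 0\}$, a Lebesgue null set. Every $x \in E$ satisfies $\eta_j(x) = 1$ for all $j$ (since $E \subset U_j$), so the pointwise inclusion $E \subset N$ holds. Hence the outer Lebesgue measure obeys $|E|^{\ast} \leq |N| = 0$, so $E$ is Lebesgue measurable with $|E| = 0$.

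The only delicate point I anticipate is the use of the \emph{homogeneous} Gagliardo--Sobolev inequality on $\mathbb{R}^n$ for $W^{s,1}$-functions that are not a priori compactly supported. If one prefers to rely strictly on Theorem \ref{thm-FSI} as stated, it suffices to localize: reduce to showing $|E \cap B_R| = 0$ for each fixed $R>0$, multiply $\eta_j$ by a cutoff $\chi \in C^\infty_c(B_{2R})$ with $\chi \equiv 1$ on $B_R$, estimate $[\chi\eta_j]_{W^{s,1}(B_{2R})}$ and $\|\chi\eta_j\|_{L^1(B_{2R})}$ by quantities that vanish along the sequence, and apply the ball version of Theorem \ref{thm-FSI}. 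This is essentially the route taken in \cite[Lemma~2.15]{KL24}.
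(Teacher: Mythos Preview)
The paper does not prove this lemma; it is stated with a citation to \cite[Lemma~2.15]{KL24} and no argument is given. Your proposal supplies a correct proof, and---as you yourself note---it follows the same route as the cited reference: extract a minimizing sequence from Lemma~\ref{lem-cap}, use a fractional Sobolev embedding to force $L^{1^\ast_s}$-convergence to zero, pass to an a.e.\ convergent subsequence, and observe that $E$ lies in the exceptional null set. The use of the homogeneous inequality $\|\eta_j\|_{L^{1^\ast_s}(\mathbb{R}^n)}\le C[\eta_j]_{W^{s,1}(\mathbb{R}^n)}$ is legitimate here since each $\eta_j$ is genuinely in $W^{s,1}(\mathbb{R}^n)$ (hence in $L^1$) and this inequality is classical for such functions (see e.g.\ \cite{MS02}).

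One minor caution about your localization alternative: applying Theorem~\ref{thm-FSI} to $\chi\eta_j$ on $B_{2R}$ produces a right-hand side involving $\|\eta_j\|_{L^1(B_{2R})}$, and Lemma~\ref{lem-cap} controls only the seminorm $[\eta_j]_{W^{s,1}(\mathbb{R}^n)}$, not the $L^1$-norm. So that variant is not quite self-contained as written; one still needs a homogeneous embedding somewhere (or a different choice of $\Omega$ in the capacity definition) to close it. Your primary global argument avoids this issue entirely.
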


\section{Caccioppoli-type estimates}\label{sec-Caccio}

As explained in the introduction, the first and second steps of the proof of the removable singularity theorem rely on a special type of Caccioppoli estimates involving the double truncations $\bar{u}$ and $\underbar{$u$}$ of $u$. For $l>k\geq0$, we define
\begin{equation}\label{eq-truncation}
\bar{u}=
\begin{cases}
k &\text{if } u \leq k, \\
u &\text{if } k < u < l, \\
l &\text{if } u \geq l,
\end{cases}
\quad\text{and}\quad
\underbar{$u$}=
\begin{cases}
l &\text{if } u \leq -l, \\
-u &\text{if } -l < u < -k, \\
k &\text{if } u \geq -k.
\end{cases}
\end{equation}
Note that if $k=0$, then $\bar{u}=\min\{u_+, l\}$ and $\underbar{$u$}=\min\{u_-, l\}$, where $u_+\coloneqq \max\{u, 0\}$ and $u_-\coloneqq -\min\{u, 0\}$ denote the positive and negative parts of $u$, respectively.

Before we state the Caccioppoli estimates, we observe that the conditions \eqref{eq-A1} and \eqref{eq-A3} imply the existence of a constant $\lambda > 0$ such that
\begin{equation}\label{eq-A-lower}
p\mathscr{A}(x, y, p) \geq \lambda (|p| - 1).
\end{equation}
Indeed, one can take $\lambda = \min\{\inf_{x, y \in \mathbb{R}^n} \mathscr{A}(x, y, 1), -\sup_{x, y \in \mathbb{R}^n} \mathscr{A}(x, y, -1)\}$ and prove \eqref{eq-A-lower} by considering the cases $|p| \leq 1$ and $|p|>1$ separately.

We now present the Caccioppoli-type estimates. It is worth noting that in most of the subsequent results, the set $E$ is not necessarily a compact subset of $\Omega$.

\begin{lemma}\label{lem-Caccio1}
Let $\Omega \subset \mathbb{R}^n$ be open and let $E \subset \Omega$ be a relatively closed set of $(s, 1)$-capacity zero. Assume that $u$ is a solution of \eqref{eq-main} in $\Omega \setminus E$. Let $G$ be a nonempty bounded open subset of $\Omega$ and let $\eta \in C^\infty_c(G)$ be such that $0 \leq \eta \leq 1$ in $G$. Let $\bar{\eta} \in W^{s, 1}(\mathbb{R}^n)$ be a function such that $\bar{\eta}$ vanishes in a neighborhood of $E$ and $0 \leq \bar{\eta} \leq 1$ in $\mathbb{R}^n$. Then, for any $l>k\geq 0$ and $\beta>0$,
\begin{equation}\label{eq-Caccio1-sub}
\begin{split}
&\int_{G} \int_{G} \frac{|\bar{v}^\beta(x) - \bar{v}^\beta(y)|}{|x-y|^{n+s}} (\eta\bar{\eta})(y) \,\mathrm{d}y \,\mathrm{d}x \\
&\leq C \int_{G} \int_{G} \bar{v}^\beta(x) \frac{|(\eta\bar{\eta})(x)-(\eta\bar{\eta})(y)|}{|x-y|^{n+s}} \,\mathrm{d}y \,\mathrm{d}x \\
&\quad + C \left( \beta (\mathrm{diam}\, G)^{1-s} + \sup_{x \in \mathrm{supp}\,\eta} \int_{\mathbb{R}^n \setminus G} \frac{\mathrm{d}y}{|x-y|^{n+s}} \right) \int_G \bar{v}^\beta \,\mathrm{d}x \\
&\quad + C \|f\|_{L^{n/s}(G \cap \{u>k\})} \|\bar{v}^\beta\eta\bar{\eta}\|_{L^{1^{\ast}_s}(G)}
\end{split}
\end{equation}
and
\begin{equation}\label{eq-Caccio1-super}
\begin{split}
&\int_{G} \int_{G} \frac{|\underbar{$v$}^\beta(x) - \underbar{$v$}^\beta(y)|}{|x-y|^{n+s}} (\eta\bar{\eta})(y) \,\mathrm{d}y \,\mathrm{d}x \\
&\leq C \int_{G} \int_{G} \underbar{$v$}^\beta(x) \frac{|(\eta\bar{\eta})(x)-(\eta\bar{\eta})(y)|}{|x-y|^{n+s}} \,\mathrm{d}y \,\mathrm{d}x \\
&\quad + C \left( \beta (\mathrm{diam}\, G)^{1-s} + \sup_{x \in \mathrm{supp}\,\eta} \int_{\mathbb{R}^n \setminus G} \frac{\mathrm{d}y}{|x-y|^{n+s}} \right) \int_G \underbar{$v$}^\beta \,\mathrm{d}x \\
&\quad + C \|f\|_{L^{n/s}(G \cap \{u<-k\})} \|\underbar{$v$}^\beta\eta\bar{\eta}\|_{L^{1^{\ast}_s}(G)},
\end{split}
\end{equation}
where $\bar{v}=\bar{u}-k+\lambda$, $\underbar{$v$}=\underbar{$u$}-k+\lambda$, $C=C(n, s, \Lambda, \lambda)>0$, and $1^{\ast}_s$ is given by \eqref{eq-1-ast}.
\end{lemma}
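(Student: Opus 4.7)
The plan is to feed the weak formulation \eqref{eq-main-weak} with the test function $\varphi=(\bar{v}^\beta-\lambda^\beta)\eta\bar{\eta}$, which is admissible by Proposition~\ref{prop-test}. Indeed, since $\bar{v}$ takes values in the bounded interval $[\lambda,l-k+\lambda]$, the map $t\mapsto t^\beta$ is Lipschitz there, $\bar{u}\in W^{s,1}_{\mathrm{loc}}(\Omega\setminus E)$ is inherited from $u$, and $\bar{\eta}=0$ in a neighborhood of $E$, so $\varphi\in W^{s,1}_{\mathrm{loc}}(\Omega)\cap L^\infty_{\mathrm{loc}}(\Omega)$ with compact support inside $\Omega\setminus E$. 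The companion estimate \eqref{eq-Caccio1-super} will be obtained in the same way with $\varphi=-(\underbar{$v$}^\beta-\lambda^\beta)\eta\bar{\eta}$.

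Setting $w=\bar{v}^\beta-\lambda^\beta\ge 0$ and $\xi=\eta\bar{\eta}\ge 0$, I insert the algebraic identity $\varphi(x)-\varphi(y)=(\bar{v}^\beta(x)-\bar{v}^\beta(y))\xi(y)+w(x)(\xi(x)-\xi(y))$ into the weak formulation. The integrand of the resulting ``main'' piece $\mathscr{A}(x,y,p)(\bar{v}^\beta(x)-\bar{v}^\beta(y))\xi(y)|x-y|^{-n-s}$ is pointwise nonnegative, because the monotonicity of the truncation $\bar{u}$ in $u$, combined with the monotonicity of $\mathscr{A}$ in its third argument, forces $\mathscr{A}(x,y,p)$ and $\bar{v}^\beta(x)-\bar{v}^\beta(y)$ to share the same sign. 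I split $\mathbb{R}^n\times\mathbb{R}^n$ into $G\times G$, $G\times G^c$, $G^c\times G$ and the trivial $G^c\times G^c$ piece; using $|\mathscr{A}|\le\Lambda$, $w\le\bar{v}^\beta$, and $\xi\le 1$, the ``cutoff'' piece on $G\times G$ is controlled by the first term of the RHS of \eqref{eq-Caccio1-sub}, while the two cross pieces (on which $\varphi$ vanishes on one side) are controlled by a constant times $\sup_{x\in\mathrm{supp}\,\eta}\int_{\mathbb{R}^n\setminus G}|x-y|^{-n-s}\,\mathrm{d}y\cdot\int_G\bar{v}^\beta\,\mathrm{d}x$, which is the tail portion of the second RHS term. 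Since $\varphi\ge 0$ is supported in $\{u>k\}\subset\{u>0\}$, \eqref{eq-B2} yields $-\int\mathscr{B}(x,u)\varphi\,\mathrm{d}x\le\int f\varphi\,\mathrm{d}x$, which after H\"older's inequality with exponents $n/s$ and $1^{\ast}_s$ produces the third RHS term. Rearranging, I obtain a bound on the $|\mathscr{A}|$-weighted integral $\int_G\!\int_G|\mathscr{A}(x,y,p)||\bar{v}^\beta(x)-\bar{v}^\beta(y)|\xi(y)|x-y|^{-n-s}\,\mathrm{d}y\,\mathrm{d}x$ by the full RHS of \eqref{eq-Caccio1-sub}.

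The principal obstacle, and the step where the $p=1$ framework requires essentially new input, is removing the $|\mathscr{A}|$-weight from this bound. I split $G\times G$ according to $\{|p|\ge 2\}$ and $\{|p|<2\}$. On $\{|p|\ge 2\}$ the lower bound \eqref{eq-A-lower} gives $|\mathscr{A}(x,y,p)|\ge\lambda/2$, so $|\bar{v}^\beta(x)-\bar{v}^\beta(y)|\le (2/\lambda)|\mathscr{A}(x,y,p)||\bar{v}^\beta(x)-\bar{v}^\beta(y)|$ and the weight is absorbed at the cost of a universal constant. On $\{|p|<2\}$ one has $|u(x)-u(y)|<2|x-y|$ and hence $|\bar{u}(x)-\bar{u}(y)|<2|x-y|$; combining this with the elementary inequality $|a^\beta-b^\beta|\le\beta\max(a,b)^{\beta-1}|a-b|$ for $\beta\ge 1$ (and with $\min(a,b)^{\beta-1}$ in place of $\max$ for $\beta<1$) and the lower bound $\bar{v}\ge\lambda$ yields $|\bar{v}^\beta(x)-\bar{v}^\beta(y)|\le C\beta\lambda^{-1}(\bar{v}^\beta(x)+\bar{v}^\beta(y))|x-y|$. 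Integrating against $\xi(y)|x-y|^{-n-s}$ over $G\times G$ and using $\int_G|x-y|^{-(n+s-1)}\,\mathrm{d}x\le C(n,s)(\mathrm{diam}\,G)^{1-s}$ produces exactly the $C\beta(\mathrm{diam}\,G)^{1-s}\int_G\bar{v}^\beta\,\mathrm{d}x$ contribution, completing \eqref{eq-Caccio1-sub}. The estimate \eqref{eq-Caccio1-super} follows by the identical computation, noting that $\underbar{$u$}$ is a nonincreasing truncation of $u$ so that $\mathscr{A}(x,y,p)$ and $\underbar{$v$}^\beta(x)-\underbar{$v$}^\beta(y)$ have opposite signs---exactly compensated by the sign flip in the test function---and that the $\mathscr{B}$-bound now comes from \eqref{eq-B2} applied with $z<0$ on $\{u<-k\}$.
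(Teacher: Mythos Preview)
Your proof is correct and uses the same test function $\varphi=H(\bar u)\eta\bar\eta$ with $H(t)=(t-k+\lambda)^\beta-\lambda^\beta$, and the same decomposition of the weak formulation into a main piece on $G\times G$, a cutoff piece on $G\times G$, cross pieces on $(G\times G^c)\cup(G^c\times G)$, and the $\mathscr{B}$-term; the treatment of the last three matches the paper's.

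The one substantive variation is in how the lower bound for the main piece is extracted. The paper first replaces $\mathscr{A}\bigl(\cdot,\cdot,(u(x)-u(y))/|x-y|\bigr)$ by $\mathscr{A}\bigl(\cdot,\cdot,(\bar u(x)-\bar u(y))/|x-y|\bigr)$ via a monotonicity inequality, and then combines \eqref{eq-A-lower} with the mean-value identity $H(\bar u(x))-H(\bar u(y))=(\bar u(x)-\bar u(y))\fint_{\bar u(y)}^{\bar u(x)}H'(t)\,\mathrm{d}t$ and the bound $\lambda H'(t)\le\beta(t-k+\lambda)^\beta$ to obtain in one stroke the pointwise inequality
\[
\mathscr{A}(\cdots)\bigl(H(\bar u(x))-H(\bar u(y))\bigr)\ge\lambda\,|\bar v^\beta(x)-\bar v^\beta(y)|-\beta\max\{\bar v^\beta(x),\bar v^\beta(y)\}\,|x-y|.
\]
Your two-case split $\{|p|\ge 2\}$ versus $\{|p|<2\}$ produces exactly the same two contributions by a slightly longer but equally valid route, and has the minor advantage of never needing to pass from $u$ to $\bar u$ inside $\mathscr{A}$. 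Either argument gives the lemma with a constant depending only on $n,s,\Lambda,\lambda$.
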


\begin{proof}
We define $H: [k, \infty) \to [0, \infty)$ by
\begin{equation*}
H(t) = (t-k+\lambda)^\beta-\lambda^\beta
\end{equation*}
and set $\varphi=H(\bar{u}) \eta\bar{\eta}$. Then, $\varphi \in W^{s, 1}_{\mathrm{loc}}(\Omega \setminus E) \cap L^\infty(\Omega \setminus E)$ and $\mathrm{supp}\,\varphi \Subset \Omega \setminus E$. It thus follows from Proposition~\ref{prop-test} that
\begin{align*}
0
&= \int_{G} \int_{G} \mathscr{A} \left( x, y, \frac{u(x)-u(y)}{|x-y|} \right) (H(\bar{u}(x)) - H(\bar{u}(y))) (\eta\bar{\eta})(y) \, \frac{\mathrm{d}y \,\mathrm{d}x}{|x-y|^{n+s}} \\
&\quad + \int_{G} \int_{G} \mathscr{A} \left( x, y, \frac{u(x)-u(y)}{|x-y|} \right) H(\bar{u}(x)) ((\eta\bar{\eta})(x) - (\eta\bar{\eta})(y)) \, \frac{\mathrm{d}y \,\mathrm{d}x}{|x-y|^{n+s}} \\
&\quad + \iint_{(G \times G^c) \cup (G^c \times G)} \mathscr{A} \left( x, y, \frac{u(x)-u(y)}{|x-y|} \right) (\varphi(x)-\varphi(y)) \, \frac{\mathrm{d}y \,\mathrm{d}x}{|x-y|^{n+s}} \\
&\quad + \int_{\Omega \setminus E} \mathscr{B}(x, u)\varphi \,\mathrm{d}x \\
&\eqqcolon I_1 + I_2 + I_3 + I_4.
\end{align*}

In order to estimate $I_1$, we first observe that
\begin{equation}\label{eq-claim-ubar}
\begin{split}
&\mathscr{A} \left( x, y, \frac{u(x)-u(y)}{|x-y|} \right) (H(\bar{u}(x)) - H(\bar{u}(y))) \\
&\geq \mathscr{A} \left( x, y, \frac{\bar{u}(x)-\bar{u}(y)}{|x-y|} \right) (H(\bar{u}(x)) - H(\bar{u}(y))).
\end{split}
\end{equation}
Indeed, if $u(x) \geq u(y)$, then $u(x)-u(y) \geq \bar{u}(x)-\bar{u}(y)$, and hence \eqref{eq-claim-ubar} follows from the assumption \eqref{eq-A1} and the monotonicity of $H$. The remaining case $u(x) < u(y)$ can be treated in the same way.

We next obtain by using \eqref{eq-A-lower} and $\lambda H'(t) \leq \beta (t-k+\lambda)^\beta$ that
\begin{equation}\label{eq-Caccio-I1}
\begin{split}
&\mathscr{A} \left( x, y, \frac{\bar{u}(x)-\bar{u}(y)}{|x-y|} \right) (H(\bar{u}(x)) - H(\bar{u}(y))) \\
&= \mathscr{A} \left( x, y, \frac{\bar{u}(x)-\bar{u}(y)}{|x-y|} \right) \frac{\bar{u}(x)-\bar{u}(y)}{|x-y|} |x-y| \fint_{\bar{u}(y)}^{\bar{u}(x)} H'(t) \,\mathrm{d}t \\
&\geq \lambda \left( \frac{|\bar{u}(x)-\bar{u}(y)|}{|x-y|} - 1 \right) |x-y| \fint_{\bar{u}(y)}^{\bar{u}(x)} H'(t) \,\mathrm{d}t \\
&\geq \lambda |H(\bar{u}(x)) - H(\bar{u}(y))| - \beta \max\{ \bar{v}^\beta(x), \bar{v}^\beta(y) \} |x-y|.
\end{split}
\end{equation}
Combining \eqref{eq-claim-ubar} and \eqref{eq-Caccio-I1} yields that
\begin{align*}
I_1
&\geq \lambda \int_{G} \int_{G} \frac{|H(\bar{u}(x))-H(\bar{u}(y))|}{|x-y|^{n+s}} (\eta\bar{\eta})(y) \,\mathrm{d}y \,\mathrm{d}x \\
&\quad - \beta \int_{G} \int_{G} \frac{\max\{ \bar{v}^\beta(x), \bar{v}^\beta(y) \}}{|x-y|^{n-(1-s)}} \,\mathrm{d}y \,\mathrm{d}x \\
&\geq \lambda \int_{G} \int_{G} \frac{|\bar{v}^\beta(x)-\bar{v}^\beta(y)|}{|x-y|^{n+s}} (\eta\bar{\eta})(y) \,\mathrm{d}y \,\mathrm{d}x - \beta \frac{|\mathbb{S}^{n-1}|(\mathrm{diam}\, G)^{1-s}}{1-s} \int_{G} \bar{v}^\beta \,\mathrm{d}x.
\end{align*}

For $I_2$ and $I_3$, we use the condition \eqref{eq-A2} to obtain that
\begin{equation*}
I_2
\geq - \Lambda \int_{G} \int_{G} \bar{v}^\beta(x) \frac{|(\eta\bar{\eta})(x)-(\eta\bar{\eta})(y)|}{|x-y|^{n+s}} \,\mathrm{d}y \,\mathrm{d}x
\end{equation*}
and that
\begin{align*}
I_3
&\geq -\Lambda \int_{G} \int_{G^c} \frac{\bar{v}^\beta(x) (\eta\bar{\eta})(x)}{|x-y|^{n+s}} \,\mathrm{d}y\,\mathrm{d}x - \Lambda \int_{G^c} \int_{G} \frac{\bar{v}^\beta(y) (\eta\bar{\eta})(y)}{|x-y|^{n+s}} \,\mathrm{d}y \,\mathrm{d}x \\
&\geq -2\Lambda \left( \sup_{x \in \mathrm{supp}\,\eta} \int_{\mathbb{R}^n \setminus G} \frac{\mathrm{d}y}{|x-y|^{n+s}} \right) \int_{G} \bar{v}^\beta \,\mathrm{d}x.
\end{align*}
Finally, we observe that the set $E$ has measure zero by Lemma~\ref{lem-measure} and that $H(\bar{u}(x))=0$ if $u(x) \leq k$. Thus, $I_4$ can be estimated as
\begin{equation*}
I_4 \geq - \int_{G \cap \{u>k\}} fH(\bar{u}) \eta \bar{\eta} \,\mathrm{d}x \geq - \|f\|_{L^{n/s}(G \cap \{u>k\})} \|\bar{v}^\beta \eta\bar{\eta}\|_{L^{1^{\ast}_s}(G)}
\end{equation*}
by using the assumption \eqref{eq-B2}. Combining all the estimates above conclude the desired estimate \eqref{eq-Caccio1-sub}.

The other estimate \eqref{eq-Caccio1-super} can be obtained by using $\varphi=H(\underbar{$u$})\eta \bar{\eta}$ as a test function and following the proof above.
\end{proof}

We simplify Lemma~\ref{lem-Caccio1} by taking a particular $\eta$ and using the fractional Sobolev inequality in Theorem~\ref{thm-FSI}.

\begin{lemma}\label{lem-Caccio2}
Let $\Omega \subset \mathbb{R}^n$ be open and let $E \subset \Omega$ be a relatively closed set of $(s, 1)$-capacity zero. Suppose that $u$ is a solution of \eqref{eq-main} in $\Omega \setminus E$. Let $\bar{\eta} \in W^{s, 1}(\mathbb{R}^n)$ be a function such that $\bar{\eta}$ vanishes in a neighborhood of $E$ and $0 \leq \bar{\eta} \leq 1$ in $\mathbb{R}^n$. Then, for each $x_0 \in \Omega$, there exists $R \in (0,1)$ such that $B_R=B_R(x_0) \subset \Omega$ and that for any $l, \beta>0$ and $0<\rho<r\leq R$,
\begin{align*}
&\int_{B_\rho} \int_{B_\rho} \frac{|w_\pm^\beta(x) - w_\pm^\beta(y)|}{|x-y|^{n+s}} \bar{\eta}(y) \,\mathrm{d}y \,\mathrm{d}x \\
&\leq C \int_{B_r} \int_{B_r} w_\pm^\beta(x) \frac{|\bar{\eta}(x)-\bar{\eta}(y)|}{|x-y|^{n+s}} \,\mathrm{d}y \,\mathrm{d}x + C(1+\beta) \frac{r}{r-\rho} r^{-s} \int_{B_r} w_\pm^\beta \,\mathrm{d}x,
\end{align*}
where $w_\pm\coloneqq \min\{u_\pm, l\}+\lambda$ and $C=C(n, s, \Lambda, \lambda)>0$.
\end{lemma}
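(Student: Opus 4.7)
The plan is to specialize Lemma~\ref{lem-Caccio1} to the case $k=0$, for which the remark following \eqref{eq-truncation} identifies $\bar{v}=\min\{u_+,l\}+\lambda = w_+$ and $\underbar{$v$}=\min\{u_-,l\}+\lambda = w_-$. Fix $x_0 \in \Omega$ and choose $R \in (0,1)$ with $B_R(x_0)\subset \Omega$ small enough that $\|f\|_{L^{n/s}(B_R)}$ is below a fixed threshold (depending only on $n,s,\Lambda,\lambda$) needed for an absorption step below; this is possible by absolute continuity of the integral together with $f \in L^{n/s}(\Omega)$. For $0<\rho<r\leq R$, pick a cutoff $\eta \in C^\infty_c(B_r)$ with $\eta \equiv 1$ on $B_\rho$, $0 \leq \eta \leq 1$, $\mathrm{supp}\,\eta \subset B_{(r+\rho)/2}$, and $|\nabla\eta|\leq C/(r-\rho)$. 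Applying Lemma~\ref{lem-Caccio1} with $G=B_r$ and these $\eta,\bar{\eta}$, the left-hand side of \eqref{eq-Caccio1-sub} (respectively \eqref{eq-Caccio1-super}) dominates the desired left-hand side, since $\eta(y) = 1$ for $y \in B_\rho$.

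For the right-hand side of Lemma~\ref{lem-Caccio1}, I would handle the first term with the Leibniz-type bound $|(\eta\bar{\eta})(x)-(\eta\bar{\eta})(y)| \leq |\bar{\eta}(x)-\bar{\eta}(y)| + |\eta(x)-\eta(y)|$: the $\bar{\eta}$-part becomes the first term on the right-hand side of the conclusion, while the $\eta$-part, using $|\eta(x)-\eta(y)| \leq C|x-y|/(r-\rho)$ and $\int_{B_r}|x-y|^{1-s-n}\,dy \leq Cr^{1-s}$, contributes at most $C(r-\rho)^{-1}r^{1-s}\int_{B_r} w_\pm^\beta \,dx$. For the second term, the tail integral over $\mathbb{R}^n \setminus B_r$ is bounded by $C(r-\rho)^{-s}$ for $x \in \mathrm{supp}\,\eta$, and $\beta(\mathrm{diam}\,B_r)^{1-s}\leq C\beta r^{1-s}$; since $r-\rho \leq r \leq R<1$, both fit inside $C(1+\beta)\frac{r}{r-\rho}r^{-s}\int_{B_r} w_\pm^\beta\,dx$.

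The principal obstacle is the third term $C\|f\|_{L^{n/s}(B_r)}\|w_\pm^\beta \eta\bar{\eta}\|_{L^{1^{\ast}_s}(B_r)}$, since the sought-after left-hand side carries no $L^{1^{\ast}_s}$-norm. Here I would invoke the fractional Sobolev inequality from Theorem~\ref{thm-FSI}:
\[
\|w_\pm^\beta \eta\bar{\eta}\|_{L^{1^{\ast}_s}(B_r)} \leq C\bigl([w_\pm^\beta \eta\bar{\eta}]_{W^{s,1}(B_r)} + r^{-s}\|w_\pm^\beta \eta\bar{\eta}\|_{L^1(B_r)}\bigr),
\]
and expand the seminorm via the product rule
\[
(w_\pm^\beta\eta\bar{\eta})(x)-(w_\pm^\beta\eta\bar{\eta})(y) = (\eta\bar{\eta})(y)(w_\pm^\beta(x)-w_\pm^\beta(y)) + w_\pm^\beta(x)((\eta\bar{\eta})(x)-(\eta\bar{\eta})(y)).
\]
The first piece reproduces the left-hand side of \eqref{eq-Caccio1-sub}/\eqref{eq-Caccio1-super}, and the second is exactly the quantity treated in the previous paragraph; the $L^1$-contribution is at most $r^{-s}\int_{B_r} w_\pm^\beta\,dx$. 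Because $R$ was fixed so that $C\|f\|_{L^{n/s}(B_R)}$ is below the required threshold, the reappearing copy of the Lemma~\ref{lem-Caccio1} left-hand side can be absorbed, and the remaining contributions fit within the terms already on the right-hand side of the conclusion. A final relabeling of constants yields the claim for both $w_+$ and $w_-$ simultaneously.
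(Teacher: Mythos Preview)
Your proposal is correct and follows essentially the same approach as the paper's proof: apply Lemma~\ref{lem-Caccio1} with $G=B_r$, $k=0$, and a cutoff $\eta$ supported in $B_{(r+\rho)/2}$; bound the $L^{1^\ast_s}$-term via Theorem~\ref{thm-FSI} and the product decomposition, then absorb using a smallness condition on $\|f\|_{L^{n/s}(B_R)}$; finally split $|(\eta\bar\eta)(x)-(\eta\bar\eta)(y)|$ and collect the remaining pieces into $C(1+\beta)\frac{r}{r-\rho}r^{-s}\int_{B_r}w_\pm^\beta\,\mathrm{d}x$. The order of the steps differs slightly (the paper absorbs first and splits $\eta\bar\eta$ afterward), but the content is the same.
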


\begin{proof}
Let $\eta \in C^\infty_c(B_{(r+\rho)/2})$ be such that $\eta = 1$ on $B_\rho$, $0 \leq \eta \leq 1$, and $|\nabla \eta| \leq 4/(r-\rho)$. Then, Lemma~\ref{lem-Caccio1} with $G=B_r$ and $k=0$ shows that
\begin{equation}\label{eq-Caccio2-1}
\begin{split}
I
&\coloneqq \int_{B_r} \int_{B_r} \frac{|w_\pm^\beta(x) - w_\pm^\beta(y)|}{|x-y|^{n+s}} (\eta\bar{\eta})(y) \,\mathrm{d}y \,\mathrm{d}x \\
&\leq C_1 \int_{B_r} \int_{B_r} w_\pm^\beta(x) \frac{|(\eta\bar{\eta})(x)-(\eta\bar{\eta})(y)|}{|x-y|^{n+s}} \,\mathrm{d}y \,\mathrm{d}x \\
&\quad + C_1 \left( \beta r^{1-s} + \int_{\mathbb{R}^n \setminus B_{(r-\rho)/2}(x)} \frac{\mathrm{d}y}{|x-y|^{n+s}} \right) \int_{B_r} w_\pm^\beta \,\mathrm{d}x \\
&\quad + C_1 \|f\|_{L^{n/s}(B_R)} \|w_\pm^\beta \eta\bar{\eta}\|_{L^{1^{\ast}_s}(B_r)}
\end{split}
\end{equation}
for some $C_1=C_1(n, s, \Lambda, \lambda)>0$. By the fractional Sobolev inequality in Theorem~\ref{thm-FSI}, we have that
\begin{equation}\label{eq-Caccio2-2}
\begin{split}
\|w_\pm^\beta\eta\bar{\eta}\|_{L^{1^{\ast}_s}(B_r)}
&\leq C_2 \left( [w_\pm^\beta\eta\bar{\eta}]_{W^{s, 1}(B_r)} + r^{-s} \|w_\pm^\beta\eta\bar{\eta}\|_{L^1(B_r)} \right) \\
&\leq C_2 I + C_2 \int_{B_r} \int_{B_r} w_\pm^\beta(x) \frac{|(\eta\bar{\eta})(x)-(\eta\bar{\eta})(y)|}{|x-y|^{n+s}} \,\mathrm{d}y \,\mathrm{d}x \\
&\quad + C_2 r^{-s} \int_{B_r} w_\pm^\beta \eta\bar{\eta} \,\mathrm{d}x
\end{split}
\end{equation}
for some $C_2=C_2(n, s)>0$. We now take $R \in (0, 1)$ sufficiently small so that
\begin{equation}\label{eq-absorption}
C_1 C_2 \|f\|_{L^{n/s}(B_R)} \leq 1/2.
\end{equation}
We combine \eqref{eq-Caccio2-1}--\eqref{eq-Caccio2-2} and then use \eqref{eq-absorption} to obtain that
\begin{align*}
\frac{1}{2}I
&\leq C \int_{B_r} \int_{B_r} w_\pm^\beta(x) \frac{|(\eta\bar{\eta})(x)-(\eta\bar{\eta})(y)|}{|x-y|^{n+s}} \,\mathrm{d}y \,\mathrm{d}x \\
&\quad + C \left( \beta r^{1-s} + (r-\rho)^{-s} + r^{-s} \right) \int_{B_r} w_\pm^\beta \,\mathrm{d}x,
\end{align*}
where $C=C(n, s, \Lambda, \lambda)>0$. Since
\begin{align*}
\int_{B_r} \frac{|(\eta\bar{\eta})(x)-(\eta\bar{\eta})(y)|}{|x-y|^{n+s}} \,\mathrm{d}y
&\leq \int_{B_r} \frac{|\bar{\eta}(x)-\bar{\eta}(y)|}{|x-y|^{n+s}} \,\mathrm{d}y + \int_{B_r} \frac{|\eta(x)-\eta(y)|}{|x-y|^{n+s}} \,\mathrm{d}y \\
&\leq \int_{B_r} \frac{|\bar{\eta}(x)-\bar{\eta}(y)|}{|x-y|^{n+s}} \,\mathrm{d}y + C\frac{r^{1-s}}{r-\rho}
\end{align*}
and
\begin{equation*}
\beta r^{1-s} + (r-\rho)^{-s} + r^{-s} + \frac{r^{1-s}}{r-\rho} \leq C(1+\beta) \frac{r}{r-\rho}r^{-s},
\end{equation*}
the desired estimate follows.
\end{proof}

\section{Removable singularity theorem}\label{sec-RST}

In this section, we provide the proof of the main theorem, Theorem~\ref{thm-main}. As detailed in the introduction, the proof involves three steps. The first step establishes that the solution, initially assumed to belong to $W^{s, 1}_{\mathrm{loc}}(\Omega \setminus K)$, attains $W^{s, 1}_{\mathrm{loc}}(\Omega)$-regularity under the additional integrability condition $u \in L^\varepsilon_{\mathrm{loc}}(\Omega)$ for some $\varepsilon>0$. Since the compactness of $K$ is not required in this step, the following lemma is stated for noncompact removable sets.

\begin{lemma}\label{lem-step1}
Let $\Omega \subset \mathbb{R}^n$ be open and let $E \subset \Omega$ be a relatively closed set of $(s, 1)$-capacity zero. If $u$ is a solution of \eqref{eq-main} in $\Omega \setminus E$ and $u \in L^{\varepsilon}_{\mathrm{loc}}(\Omega)$ for some $\varepsilon > 0$, then $u \in W^{s, 1}_{\mathrm{loc}}(\Omega)$.
\end{lemma}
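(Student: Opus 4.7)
The plan is a two-stage bootstrap executed on the truncations $w_\pm = \min\{u_\pm, l\} + \lambda$ appearing in Lemma~\ref{lem-Caccio2}. First I would raise the integrability of $u$ from $L^\varepsilon_{\mathrm{loc}}(\Omega)$ to $L^\infty_{\mathrm{loc}}(\Omega)$ via a Moser iteration based on Lemma~\ref{lem-Caccio2}, using the $(s,1)$-capacity-zero hypothesis on $E$ in an essential way to eliminate the cutoff $\bar\eta$. Once $u$ is known to be locally bounded, I would re-apply Lemma~\ref{lem-Caccio2} with exponent $\beta = 1$ and truncation level $l$ exceeding the local $L^\infty$-norm of $u$, which will directly yield $u \in W^{s,1}_{\mathrm{loc}}(\Omega)$.

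The key limiting step goes as follows. Fix $x_0 \in \Omega$ and the radius $R$ from Lemma~\ref{lem-Caccio2}. By Lemma~\ref{lem-cap-zero}, choose $\bar\eta_j \in W^{s,1}(\mathbb{R}^n)$ vanishing near $E$, with $0 \le \bar\eta_j \le 1$, $\bar\eta_j \to 1$ a.e., and $[\bar\eta_j]_{W^{s,1}(\mathbb{R}^n)} \to 0$. Apply Lemma~\ref{lem-Caccio2} with $\bar\eta = \bar\eta_j$; since $w_\pm^\beta \le (l+\lambda)^\beta$ pointwise, the first term on the right-hand side is dominated by $(l+\lambda)^\beta\,[\bar\eta_j]_{W^{s,1}(\mathbb{R}^n)}$ and vanishes as $j \to \infty$. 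Fatou's lemma on the left-hand side then produces the cutoff-free estimate
\begin{equation*}
[w_\pm^\beta]_{W^{s,1}(B_\rho)} \le C(1+\beta)\,\frac{r}{r-\rho}\,r^{-s}\,\|w_\pm^\beta\|_{L^1(B_r)},
\end{equation*}
valid for every $l,\beta > 0$ and $0 < \rho < r \le R$, with a constant independent of $l$. Combined with Theorem~\ref{thm-FSI} this yields the reverse-H\"older inequality
\begin{equation*}
\|w_\pm\|_{L^{\chi\beta}(B_\rho)} \le \left[C(1+\beta)\tfrac{r}{r-\rho}\,r^{-s}\right]^{1/\beta}\,\|w_\pm\|_{L^\beta(B_r)}, \qquad \chi = \tfrac{n}{n-s} > 1.
\end{equation*}

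Running the standard Moser iteration with $\beta_k = \varepsilon\chi^k$ and nested radii $\rho_k = \rho + 2^{-k}(R-\rho)$, the series $\sum_k \chi^{-k}\log(C(1+\beta_k)\,2^k)$ converges, so the iteration terminates in
\begin{equation*}
\|w_\pm\|_{L^\infty(B_\rho)} \le C\,\|u_\pm + \lambda\|_{L^\varepsilon(B_R)},
\end{equation*}
with $C$ depending only on $n,s,\Lambda,\lambda,\varepsilon,R,\rho$ and \emph{not} on $l$. Letting $l \to \infty$ by monotone convergence gives $u_\pm \in L^\infty_{\mathrm{loc}}(\Omega)$ and hence $u \in L^\infty_{\mathrm{loc}}(\Omega)$.

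For the concluding stage, fix any ball $B_R$ as above, choose $l > \|u\|_{L^\infty(B_R)}$ so that $w_\pm \equiv u_\pm + \lambda$ on $B_R$, and insert $\beta = 1$ into the displayed Caccioppoli estimate; the right-hand side is finite, so $u_\pm \in W^{s,1}(B_\rho)$ and therefore $u = u_+ - u_- \in W^{s,1}_{\mathrm{loc}}(\Omega)$. I expect the principal obstacle to be ensuring that the crude bound $w_\pm^\beta \le (l+\lambda)^\beta$ is used \emph{only} to kill the capacity-zero term with $\bar\eta_j$ and never enters the Moser constants themselves; this is precisely why the truncation must be kept through both stages and released only at the final $l \to \infty$ step.
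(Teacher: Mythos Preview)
Your proposal is correct and follows essentially the same route as the paper: use Lemma~\ref{lem-cap-zero} to produce the sequence $\bar\eta_j$, feed it into Lemma~\ref{lem-Caccio2}, bound the $\bar\eta_j$-term by $(l+\lambda)^\beta[\bar\eta_j]_{W^{s,1}}$ and let $j\to\infty$, then combine the resulting cutoff-free Caccioppoli inequality with Theorem~\ref{thm-FSI} to run a Moser iteration from exponent $\varepsilon$ up to $L^\infty$, and finally reuse the same inequality at $\beta=1$ to conclude $W^{s,1}_{\mathrm{loc}}$. The only cosmetic difference is that the paper sends $l\to\infty$ \emph{before} iterating (working directly with $\Phi_\pm(\beta,r)=(\fint_{B_r}(u_\pm+\lambda)^\beta)^{1/\beta}$), whereas you keep $l$ fixed through the iteration and release it afterward; both orderings are valid since, as you correctly emphasize, the iteration constants are independent of $l$.
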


\begin{proof}
We fix $x_0 \in \Omega$ and let $R \in (0,1)$ be the radius given in Lemma~\ref{lem-Caccio2}. It is enough to show that $u \in W^{s, 1}(B_{R/4})$. Since $E$ has $(s, 1)$-capacity zero, Lemma~\ref{lem-cap-zero} shows that there exists a sequence of functions $\bar{\eta}_j \in W^{s, 1}(\mathbb{R}^n)$ such that $\bar{\eta}_j=0$ in a neighborhood of $E$, $0 \leq \bar{\eta}_j \leq 1$ in $\mathbb{R}^n$, $\lim_{j\to \infty} [\bar{\eta}_j]_{W^{s,1}(\mathbb{R}^n)} \to 0$, and $\bar{\eta}_j \to 1$ a.e.\ in $\mathbb{R}^n$ as $j \to \infty$. Suppose $R/4\leq \rho<r\leq R$ and let $l, \beta>0$. Then, Lemma~\ref{lem-Caccio2} with $\bar{\eta}_j$ in place of $\bar{\eta}$ shows that
\begin{equation*}
\begin{split}
&\int_{B_\rho} \int_{B_\rho} \frac{|w_\pm^\beta(x)-w_\pm^\beta(y)|}{|x-y|^{n+s}} \bar{\eta}_j(y) \,\mathrm{d}y \,\mathrm{d}x \\
&\leq C \int_{B_r} \int_{B_r} w_\pm^\beta(x) \frac{|\bar{\eta}_j(x)-\bar{\eta}_j(y)|}{|x-y|^{n+s}} \,\mathrm{d}y\,\mathrm{d}x + C\frac{1+\beta}{r-\rho} \int_{B_r} w_\pm^\beta \,\mathrm{d}x \\
&\leq C (l+\lambda)^\beta [\bar{\eta}_j]_{W^{s, 1}(B_r)} + C \frac{1+\beta}{r-\rho} \int_{B_r} w_\pm^\beta \,\mathrm{d}x,
\end{split}
\end{equation*}
where $w_\pm=\min\{u_\pm, l\} + \lambda$ and $C=C(n, s, \Lambda, \lambda)>0$.

Since $\lim_{j \to \infty} [\bar{\eta}_j]_{W^{s, 1}(B_r)} = 0$, taking the limit as $j \to \infty$, we obtain that
\begin{equation}\label{eq-I2}
\begin{split}
[w_\pm^\beta]_{W^{s, 1}(B_\rho)}
&\leq \liminf_{j \to \infty} \int_{B_\rho} \int_{B_\rho} \frac{|w_\pm^\beta(x)-w_\pm^\beta(y)|}{|x-y|^{n+s}} \bar{\eta}_j(y) \,\mathrm{d}y \,\mathrm{d}x \\
&\leq C \frac{1+\beta}{r-\rho} \int_{B_r} w_\pm^\beta \,\mathrm{d}x.
\end{split}
\end{equation}
The estimate \eqref{eq-I2}, combined with the fractional Sobolev inequality in Theorem~\ref{thm-FSI}, yields that
\begin{equation*}
\left( \fint_{B_\rho} w_\pm^{\beta 1^{\ast}_s} \,\mathrm{d}x \right)^{1/1^{\ast}_s} \leq C \rho^{s-n} [w_\pm^\beta]_{W^{s, 1}(B_\rho)} + C \fint_{B_\rho} w_\pm^\beta \,\mathrm{d}x \leq C \frac{1+\beta}{r-\rho} \fint_{B_r} w_\pm^\beta \,\mathrm{d}x.
\end{equation*}
Since $w_\pm \to u_\pm+\lambda$ as $l \to \infty$, taking the limit as $l \to \infty$ shows that
\begin{equation}\label{eq-Phi-gamma}
\Phi_\pm(\beta 1^{\ast}_s, \rho) \leq \left( C \frac{1+\beta}{r-\rho} \right)^{1/\beta} \Phi_\pm(\beta, r),
\end{equation}
where
\begin{equation*}
\Phi_\pm(\beta, r) = \left( \fint_{B_r} (u_\pm+\lambda)^{\beta} \,\mathrm{d}x \right)^{1/\beta}.
\end{equation*}
A standard iteration of \eqref{eq-Phi-gamma} beginning with $\beta=\varepsilon$ establishes that
\begin{equation}\label{eq-bdd}
\|u_\pm+\lambda\|_{L^\infty(B_{R/2})} \leq C \Phi(\varepsilon, R) = C R^{-n/\varepsilon} \|u_\pm+\lambda\|_{L^{\varepsilon}(B_{R})} < \infty.
\end{equation}
Moreover, applying the estimate \eqref{eq-I2} with $\beta=1$, $\rho=R/4$, and $r=R/2$ as $l \to \infty$, along with the estimate \eqref{eq-bdd}, shows that
\begin{equation*}
[u_\pm]_{W^{s, 1}(B_{R/4})} \leq \frac{C}{R} \int_{B_{R/2}} (u_\pm+\lambda) \,\mathrm{d}x \leq CR^{n-1} \|u_\pm+\lambda\|_{L^\infty(B_{R/2})} < \infty.
\end{equation*}
This concludes that $u \in W^{s, 1}(B_{R/4})$.
\end{proof}

The second step in the proof of Theorem~\ref{thm-main} is to show that the solution attains a certain level of integrability, which allows the additional assumption on $u$ imposed in Lemma~\ref{lem-step1} to be removed. We remark that this is the only part of the proof where the compactness of $K \subset \Omega$ is used.

\begin{lemma}\label{lem-step2}
Let $\Omega \subset \mathbb{R}^n$ be open and let $K \subset \Omega$ be a compact set of $(s, 1)$-capacity zero. If $u$ is a solution of \eqref{eq-main} in $\Omega \setminus K$, then $u \in L^{1^{\ast}_s}_{\mathrm{loc}}(\Omega)$.
\end{lemma}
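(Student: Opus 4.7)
The plan is to adapt Serrin's classical removable-singularity argument~\cite{Ser65} to the nonlocal setting, exploiting the fact (emphasized in the introduction) that the nonlocal tail term in Lemma~\ref{lem-Caccio1}---namely $\sup_{x\in\mathrm{supp}\,\eta}\int_{\mathbb{R}^n\setminus G}|x-y|^{-n-s}\,\mathrm{d}y$---is independent of $u$ outside $G$, thanks to $|\mathscr{A}|\leq\Lambda$. By the compactness of $K$, it suffices to prove $u\in L^{1^{\ast}_s}(V)$ for a single bounded open $V\Subset\Omega$ with $K\subset V$. Enclose $V\cup K$ in a ball $B=B_R(x_0)\Subset\Omega$, pick an open set $G_1$ with $K\subset G_1\Subset B$ whose Lebesgue measure $|G_1|$ will be chosen arbitrarily small (feasible since $|K|=0$ by Lemma~\ref{lem-measure} together with outer regularity), fix $\eta\in C_c^\infty(B)$ with $\eta\equiv 1$ on $G_1$ and $|\nabla\eta|\leq C/R$, and let $\bar\eta_j$ be the capacity cutoffs from Lemma~\ref{lem-cap-zero}.

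Choose $k\geq0$ large enough that $\|f\|_{L^{n/s}(B\cap\{u>k\})}$ is small enough for absorption (possible because $|B\cap\{u>k\}|\to0$ as $k\to\infty$ and $f\in L^{n/s}(\Omega)$ is absolutely continuous). Apply Lemma~\ref{lem-Caccio1} with $G=B$, $\beta=1$, this $k$, $\eta$, and $\bar\eta=\bar\eta_j$, then combine with Theorem~\ref{thm-FSI} applied to $\bar v\,\eta\bar\eta_j$ and absorb the $f$-term into the left-hand side. For each fixed $l>k$, pass $j\to\infty$ via dominated convergence (valid because $\bar v\leq l-k+\lambda$ is bounded and $[\bar\eta_j]_{W^{s,1}}\to 0$, $\bar\eta_j\to 1$ a.e.), arriving at
\begin{equation*}
\|\bar v\,\eta\|_{L^{1^{\ast}_s}(B)}\;\leq\; C\int_B\bar v\,\mathrm{d}x,
\end{equation*}
with $C=C(n,s,\Lambda,\lambda,R)$ independent of $l$. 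Now split $\int_B\bar v\,\mathrm{d}x=\int_{G_1}\bar v\,\mathrm{d}x+\int_{B\setminus G_1}\bar v\,\mathrm{d}x$: H\"older combined with $\eta\equiv 1$ on $G_1$ gives $\int_{G_1}\bar v\,\mathrm{d}x\leq|G_1|^{s/n}\|\bar v\,\eta\|_{L^{1^{\ast}_s}(B)}$, while $\overline{B\setminus G_1}\Subset\Omega\setminus K$ and $u\in W^{s,1}_{\mathrm{loc}}(\Omega\setminus K)$, together with Theorem~\ref{thm-FSI}, yield $M_0\coloneqq\int_{B\setminus G_1}(u_++\lambda)\,\mathrm{d}x<\infty$ and $\int_{B\setminus G_1}\bar v\,\mathrm{d}x\leq M_0$ uniformly in $l$. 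Choosing $|G_1|$ so small that $C|G_1|^{s/n}\leq 1/2$ and absorbing, we conclude
\begin{equation*}
\|\bar v\,\eta\|_{L^{1^{\ast}_s}(B)}\;\leq\; 2CM_0\quad\text{uniformly in }l.
\end{equation*}

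Monotone convergence as $l\to\infty$ gives $\bar v\nearrow(u_+-k)_++\lambda$, so $(u_+-k)_+\in L^{1^{\ast}_s}(G_1)$ and hence $u_+\in L^{1^{\ast}_s}(G_1)$. Running the parallel argument with the $\underbar v$-estimate of Lemma~\ref{lem-Caccio1} yields $u_-\in L^{1^{\ast}_s}(G_1)$, so $u\in L^{1^{\ast}_s}(G_1)$. Combined with $u\in L^{1^{\ast}_s}(B\setminus G_1)$, this gives $u\in L^{1^{\ast}_s}(B)\supset L^{1^{\ast}_s}(V)$. The principal obstacle is that, after the Caccioppoli step, $\int_B\bar v\,\mathrm{d}x$ grows linearly in $l$, preventing an immediate passage $l\to\infty$; the resolution is to isolate the small neighborhood $G_1$ of $K$, where the contribution is absorbed into the left-hand side via H\"older (possible because $|K|=0$), while the complement $B\setminus G_1$ lies compactly in $\Omega\setminus K$ and is controlled uniformly in $l$ by the interior $W^{s,1}_{\mathrm{loc}}$-regularity. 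This is also the single step in the overall proof where the compactness of $K$ is used essentially, namely to enclose $K$ in a ball inside $\Omega$.
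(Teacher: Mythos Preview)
Your proof is correct and follows the same overall scheme as the paper's: apply Lemma~\ref{lem-Caccio1} with $\beta=1$, combine with the fractional Sobolev inequality, absorb the $f$-term by choosing $k$ large, pass $j\to\infty$ using the capacity cutoffs, and finally let $l\to\infty$. The one substantive difference is the second absorption. The paper exploits that $\bar u-k$ vanishes on $\{u\le k\}$, so H\"older gives $\|\bar u-k\|_{L^1(\Omega')}\le|\Omega'\cap\{u>k\}|^{s/n}\|\bar u-k\|_{L^{1^\ast_s}(\Omega')}$, and then enlarges $k$ further to shrink $|\Omega'\cap\{u>k\}|$. You instead shrink a geometric neighbourhood $G_1\supset K$ (possible since $|K|=0$) and absorb $\int_{G_1}\bar v$ via H\"older on $G_1$. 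Both mechanisms work; yours is slightly more elementary on the complementary region, since you only use $u\in W^{s,1}_{\mathrm{loc}}(\Omega\setminus K)\subset L^1_{\mathrm{loc}}$, whereas the paper invokes the (overkill) fact that $u\in C^\infty(\Omega\setminus K)$. One point to tidy: as written you pick $G_1$ before $\eta$, but the absorption constant $C$ depends on $\eta$ through $\|\nabla\eta\|_\infty$ and $\mathrm{dist}(\mathrm{supp}\,\eta,\partial B)$. To avoid circularity, fix $\eta$ first as a cutoff for a fixed ball $B_{R/2}\supset K$ inside $B$ (so that indeed $|\nabla\eta|\le C/R$ independently of $G_1$), determine $C$, and only then choose $G_1\subset B_{R/2}$ with $C|G_1|^{s/n}\le 1/2$.
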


\begin{proof}
Let $\Omega' \Subset \Omega$ be an open set such that $K \subset \Omega'$. It is enough to show that $u \in L^{1^{\ast}_s}(\Omega')$. We fix a bounded open set $G$ with Lipschitz boundary such that $\Omega' \Subset G \Subset \Omega$. Let $\eta \in C^\infty_c(G)$ be a function such that $\eta =1$ on $\Omega'$, $0 \leq \eta \leq 1$, and $|\nabla \eta| \leq C$ for some $C>0$. By Lemma~\ref{lem-cap-zero}, there exist functions $\bar{\eta}_j \in W^{s, 1}(\mathbb{R}^n)$ such that $\bar{\eta}_j=0$ in a neighborhood of $K$, $0 \leq \bar{\eta}_j \leq 1$ in $\mathbb{R}^n$, $\lim_{j\to \infty} [\bar{\eta}_j]_{W^{s,1}(\mathbb{R}^n)} = 0$, and $\bar{\eta}_j \to 1$ a.e.\ in $\mathbb{R}^n$ as $j \to \infty$. Then, the estimate \eqref{eq-Caccio1-sub} with $\beta=1$ and $\bar{\eta}=\bar{\eta}_j$ shows that
\begin{align*}
I
&\coloneqq \int_{G} \int_{G} \frac{|\bar{u}(x) - \bar{u}(y)|}{|x-y|^{n+s}} (\eta\bar{\eta}_j)(y) \,\mathrm{d}y \,\mathrm{d}x \\
&\leq C_1 \int_{G} \int_{G} (\bar{u}(x)-k+\lambda) \frac{|(\eta\bar{\eta}_j)(x)-(\eta\bar{\eta}_j)(y)|}{|x-y|^{n+s}} \,\mathrm{d}y \,\mathrm{d}x \\
&\quad + C_1 \left( (\mathrm{diam}\, G)^{1-s} + \sup_{x \in \mathrm{supp}\,\eta} \int_{\mathbb{R}^n \setminus G} \frac{\mathrm{d}y}{|x-y|^{n+s}} \right) \int_{G} (\bar{u}-k+\lambda) \,\mathrm{d}x \\
&\quad + C_1 \|f\|_{L^{n/s}(G \cap \{u>k\})} \|(\bar{u}-k+\lambda)\eta\bar{\eta}_j\|_{L^{1^{\ast}_s}(G)},
\end{align*}
where $\bar{u}$ is defined as in \eqref{eq-truncation} and $C_1=C_1(n, s, \Lambda, \lambda)>0$.

By the fractional Sobolev inequality in Theorem~\ref{thm-FSI}, we have that
\begin{align*}
\|(\bar{u}-k+\lambda)\eta\bar{\eta}_j\|_{L^{1^{\ast}_s}(G)}
&\leq C_2 \int_{G} \int_{G} (\bar{u}(x)-k+\lambda) \frac{|(\eta\bar{\eta}_j)(x)-(\eta\bar{\eta}_j)(y)|}{|x-y|^{n+s}} \,\mathrm{d}y \,\mathrm{d}x \\
&\quad + C_2 I + C_2 \|(\bar{u}-k+\lambda)\eta\bar{\eta}_j\|_{L^1(G)}
\end{align*}
for some $C_2=C_2(n, s, G)>0$. If we take $k$ sufficiently large so that
\begin{equation}\label{eq-k}
C_1C_2 \|f\|_{L^{n/s}(G \cap \{u>k\})} \leq 1/2,
\end{equation}
then
\begin{equation*}
\frac{1}{2} I \leq C \int_{G} \int_{G} (\bar{u}(x)-k+\lambda) \frac{|(\eta\bar{\eta}_j)(x)-(\eta\bar{\eta}_j)(y)|}{|x-y|^{n+s}} \,\mathrm{d}y \,\mathrm{d}x + C \int_G (\bar{u}-k+\lambda) \,\mathrm{d}x
\end{equation*}
for some $C>0$, where we computed
\begin{equation*}
\sup_{x \in \mathrm{supp}\,\eta} \int_{\mathbb{R}^n \setminus G} \frac{\mathrm{d}y}{|x-y|^{n+s}} \leq \sup_{x \in \mathrm{supp}\,\eta} \int_{\mathbb{R}^n \setminus B_d(x)} \frac{\mathrm{d}y}{|x-y|^{n+s}} \leq \frac{|\mathbb{S}^{n-1}|}{s d^s}
\end{equation*}
with $d=\mathrm{dist}(\mathrm{supp}\,\eta, \partial G)>0$. Another application of the fractional Sobolev inequality and the triangle inequality
\begin{equation*}
|(\eta\bar{\eta}_j)(x)-(\eta\bar{\eta}_j)(y)| \leq |\bar{\eta}_j(x)-\bar{\eta}_j(y)| + |\eta(x)-\eta(y)|
\end{equation*}
yield that
\begin{align*}
&\|(\bar{u}-k+\lambda)\eta\bar{\eta}_j\|_{L^{1^{\ast}_s}(G)} \\
&\leq C \int_{G} \int_{G} (\bar{u}(x)-k+\lambda) \frac{|(\eta\bar{\eta}_j)(x)-(\eta\bar{\eta}_j)(y)|}{|x-y|^{n+s}} \,\mathrm{d}y \,\mathrm{d}x + C \int_{G} (\bar{u}-k+\lambda) \,\mathrm{d}x \\
&\leq C (l-k+\lambda) [\bar{\eta}_j]_{W^{s, 1}(G)} + C \int_{G} (\bar{u}-k+\lambda) \,\mathrm{d}x.
\end{align*}
Since $\lim_{j\to \infty} [\bar{\eta}_j]_{W^{s,1}(\mathbb{R}^n)} = 0$, by letting $j \to \infty$, we have that
\begin{align*}
\|\bar{u}-k\|_{L^{1^{\ast}_s}(\Omega')}
&\leq \|\bar{u}-k+\lambda\|_{L^{1^{\ast}_s}(\Omega')} \\
&\leq \|(\bar{u}-k+\lambda)\eta\|_{L^{1^{\ast}_s}(G)} \\
&\leq C_3 \|\bar{u}-k+\lambda\|_{L^1(G)} \\
&\leq C_3 \|\bar{u}-k\|_{L^1(\Omega')} + C_3 \|\bar{u}-k\|_{L^1(G\setminus \Omega')} + C_3 \lambda |G|
\end{align*}
for some $C_3>0$. Observing that $\bar{u}-k=0$ in $\{u \leq k\}$ and using H\"older's inequality, we obtain that
\begin{equation*}
\|\bar{u}-k\|_{L^1(\Omega')} = \|\bar{u}-k\|_{L^1(\Omega' \cap \{u > k\})} \leq |\Omega' \cap \{u>k\}|^{s/n} \|\bar{u}-k\|_{L^{1^{\ast}_s}(\Omega')}.
\end{equation*}
By taking $k$ sufficiently large so that \eqref{eq-k} and $C_3 |\Omega' \cap \{u>k\}|^{s/n} \leq 1/2$ hold, we have that
\begin{equation*}
\|\bar{u}-k\|_{L^{1^{\ast}_s}(\Omega')} \leq C \|\bar{u}-k\|_{L^1(G\setminus \Omega')} + C
\end{equation*}
for some $C>0$. Letting $l$ tend to infinity shows that
\begin{equation*}
\|u-k\|_{L^{1^{\ast}_s}(\Omega' \cap \{u>k\})} \leq C \|u-k\|_{L^1((G\setminus \Omega')\cap \{u>k\})} + C,
\end{equation*}
and consequently,
\begin{align}\label{eq-step2}
\begin{split}
\|u_+\|_{L^{1^{\ast}_s}(\Omega')}
&\leq C \|u\|_{L^{1^{\ast}_s}(\Omega' \cap \{u>k\})} + C \|u\|_{L^{1^{\ast}_s}(\Omega' \cap \{0<u\leq k\})} \\
&\leq C \|u-k\|_{L^{1^{\ast}_s}(\Omega' \cap \{u>k\})} + C \|k\|_{L^{1^{\ast}_s}(\Omega' \cap \{u>k\})} + C \|k\|_{L^{1^{\ast}_s}(\Omega')} \\
&\leq C \|u-k\|_{L^1((G\setminus \Omega')\cap \{u>k\})} + C + 2Ck|\Omega'|^{1-s/n}.
\end{split}
\end{align}
Since $u \in L^1_{\mathrm{loc}}(\Omega \setminus K)$ and $G \setminus \Omega' \Subset \Omega \setminus K$, the right-hand side of \eqref{eq-step2} is finite.

Similarly, one can show that $u_- \in L^{1^{\ast}_s}(\Omega')$ by using the estimate \eqref{eq-Caccio1-super} instead of \eqref{eq-Caccio1-sub}. This concludes that $u \in L^{1^{\ast}_s}(\Omega')$.
\end{proof}

We complete the proof of Theorem~\ref{thm-main} by applying the two previous lemmas.

\begin{proof}[Proof of Theorem~\ref{thm-main}]
By Lemmas~\ref{lem-step1} and \ref{lem-step2}, we have $u \in W^{s, 1}_{\mathrm{loc}}(\Omega)$. It only remains to prove that $u$ satisfies \eqref{eq-main-weak} for any $\varphi \in C^\infty_c(\Omega)$. As always, Lemma~\ref{lem-cap-zero} shows that there exists a sequence of functions $\bar{\eta}_j \in W^{s, 1}(\mathbb{R}^n)$ such that $\bar{\eta}_j=0$ in a neighborhood of $K$, $0 \leq \bar{\eta}_j \leq 1$ in $\mathbb{R}^n$, $\lim_{j\to \infty} [\bar{\eta}_j]_{W^{s, 1}(\mathbb{R}^n)} = 0$, and $\bar{\eta}_j \to 1$ a.e.\ in $\mathbb{R}^n$ as $j \to \infty$. We define $\varphi_j=\varphi \bar{\eta}_j$, then $\varphi_j \in W^{s, 1}(\Omega \setminus K) \cap L^\infty(\Omega \setminus K)$ and $\mathrm{supp}\,\varphi_j \Subset \Omega \setminus K$. It thus follows from Proposition~\ref{prop-test} that
\begin{equation}\label{eq-weak-j}
\int_{\mathbb{R}^n} \int_{\mathbb{R}^n} \mathscr{A} \left( x, y, \frac{u(x)-u(y)}{|x-y|} \right) (\varphi_j(x)-\varphi_j(y)) \frac{\mathrm{d}y \,\mathrm{d}x}{|x-y|^{n+s}} + \int_{\Omega \setminus K} \mathscr{B}(x, u) \varphi_j \,\mathrm{d}x = 0.
\end{equation}

On the one hand, by using the assumption \eqref{eq-A2} and
\begin{align*}
|(\varphi_j-\varphi)(x) - (\varphi_j-\varphi)(y)|
&=|\varphi(y)(\bar{\eta}_j(x)-\bar{\eta}_j(y))-(1-\bar{\eta}_j(x))(\varphi(x)-\varphi(y))| \\
&\leq |\varphi(y)| |\bar{\eta}_j(x)-\bar{\eta}_j(y)| + (1-\bar{\eta}_j(x)) |\varphi(x)-\varphi(y)|,
\end{align*}
we obtain that
\begin{align*}
&\left| \int_{\mathbb{R}^n} \int_{\mathbb{R}^n} \mathscr{A} \left( x, y, \frac{u(x)-u(y)}{|x-y|} \right) ((\varphi_j-\varphi)(x)-(\varphi_j-\varphi)(y)) \frac{\mathrm{d}y \,\mathrm{d}x}{|x-y|^{n+s}} \right| \\
&\leq \Lambda \|\varphi\|_{L^\infty(\mathbb{R}^n)} [\bar{\eta}_j]_{W^{s, 1}(\mathbb{R}^n)} + \Lambda \int_{\mathbb{R}^n} \int_{\mathbb{R}^n} (1-\bar{\eta}_j(x)) \frac{|\varphi(x)-\varphi(y)|}{|x-y|^{n+s}} \,\mathrm{d}y \,\mathrm{d}x \to 0
\end{align*}
as $j \to \infty$. Note that we used $\lim_{j\to \infty} [\bar{\eta}_j]_{W^{s, 1}(\mathbb{R}^n)} = 0$ for the first term and the dominated convergence theorem for the second term.

On the other hand, since $u \in L^1_{\mathrm{loc}}(\Omega)$, we have $\mathscr{B}(x, u) \in L^1_{\mathrm{loc}}(\Omega)$ by \eqref{eq-B1}. Moreover, the set $K$ has measure zero by Lemma~\ref{lem-measure}. Thus, we obtain that
\begin{align}\label{eq-step3-B}
\begin{split}
&\left| \int_{\Omega \setminus K} \mathscr{B}(x, u)\varphi_j \,\mathrm{d}x - \int_{\Omega} \mathscr{B}(x, u)\varphi \,\mathrm{d}x \right| \\
&= \left| \int_{\Omega} \mathscr{B}(x, u)(\varphi_j-\varphi) \,\mathrm{d}x \right| \leq \|\varphi\|_{L^\infty(\Omega)} \int_{\mathrm{supp}\,\varphi} |\mathscr{B}(x, u)| (1-\bar{\eta}_j) \,\mathrm{d}x \to 0
\end{split}
\end{align}
as $j \to \infty$ by the dominated convergence theorem. Therefore, we arrive at \eqref{eq-main-weak} by passing from \eqref{eq-weak-j} to the limit as $j \to \infty$.
\end{proof}

\section{Removable singularity theorem for weak solutions}\label{sec-weak}

In this section, we prove the removable singularity theorem, Theorem~\ref{thm-main-weak}, for weak solutions instead of solutions. We recall the definition of weak solutions for \eqref{eq-main} from Definition~\ref{def-weak}. For this purpose, we assume additionally that $\mathscr{B}$ is independent of $z$ variable and belongs to $L^1(\Omega)$, i.e.\
\begin{equation}\label{eq-B3}
\mathscr{B}(x, z)=\mathscr{B}(x) \in L^1_{\mathrm{loc}}(\Omega).
\end{equation}
The assumption \eqref{eq-B3} is stronger than \eqref{eq-B1} and \eqref{eq-B2}, but the equation \eqref{eq-main} under the assumptions \eqref{eq-A1}, \eqref{eq-A2}, \eqref{eq-A3}, and \eqref{eq-B3} still covers Example~\ref{examples}~(i) and (iii).

\begin{theorem}\label{thm-main-weak}
Let $\Omega \subset \mathbb{R}^n$ be open and let $E \subset \Omega$ be a relatively closed set of $(s, 1)$-capacity zero. Assume that $\mathscr{B}$ satisfies \eqref{eq-B3}. If $u$ is a weak solution of \eqref{eq-main} in $\Omega \setminus E$, then $u$ has a representative that is a weak solution of \eqref{eq-main} in all of $\Omega$.
\end{theorem}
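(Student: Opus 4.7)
My plan is a streamlined version of the third step in the proof of Theorem~\ref{thm-main}: no regularity upgrade on $u$ is needed, only the propagation of the weak formulation across the exceptional set $E$. The key observation is that under \eqref{eq-A2} and \eqref{eq-B3} all the relevant estimates decouple from $u$, so no Sobolev regularity of $u$ is required; in particular the first two steps (Lemmas~\ref{lem-step1} and \ref{lem-step2}) are bypassed entirely.

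Fix $\varphi \in C^\infty_c(\Omega)$ and let $\bar\eta_j$ be the cut-offs furnished by Lemma~\ref{lem-cap-zero} for $E$: they vanish on open neighborhoods $U_j$ of $E$, are bounded in $[0,1]$, satisfy $[\bar\eta_j]_{W^{s,1}(\mathbb{R}^n)} \to 0$, and converge to $1$ a.e. Set $\varphi_j \coloneqq \varphi\bar\eta_j$. Since $u$ is only assumed measurable, Proposition~\ref{prop-test} is not directly available, so I would mollify $\bar\eta_j$ at a scale $\varepsilon(j)>0$ smaller than the distance from the compact set $E\cap\mathrm{supp}\,\varphi$ (compact because $E$ is relatively closed in $\Omega$ and $\mathrm{supp}\,\varphi \Subset \Omega$) to $\mathbb{R}^n\setminus U_j$. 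The resulting $\bar\eta_{j,\varepsilon}\in C^\infty(\mathbb{R}^n)$ still vanishes in a neighborhood of $E\cap\mathrm{supp}\,\varphi$, and therefore $\varphi\bar\eta_{j,\varepsilon}\in C^\infty_c(\Omega\setminus E)$ is an admissible test function. Testing the weak formulation against $\varphi\bar\eta_{j,\varepsilon}$ and sending $\varepsilon\to 0$, which is justified using only $|\mathscr{A}|\leq \Lambda$ (which bounds the double integral by the $W^{s,1}$-seminorm of the test function, exactly as in the proof of Proposition~\ref{prop-test}) and $\mathscr{B}\in L^1_{\mathrm{loc}}(\Omega)$, yields
\begin{equation*}
\int_{\mathbb{R}^n}\int_{\mathbb{R}^n} \mathscr{A}\Bigl(x,y,\tfrac{u(x)-u(y)}{|x-y|}\Bigr)(\varphi_j(x)-\varphi_j(y)) \frac{\mathrm{d}y\,\mathrm{d}x}{|x-y|^{n+s}} + \int_{\Omega\setminus E} \mathscr{B}\,\varphi_j \,\mathrm{d}x = 0.
\end{equation*}

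The remaining step is to pass $j\to\infty$, which is carried out exactly as in the final step of the proof of Theorem~\ref{thm-main}. The algebraic identity
\begin{equation*}
(\varphi_j-\varphi)(x)-(\varphi_j-\varphi)(y) = (\varphi(x)-\varphi(y))(\bar\eta_j(x)-1) + \varphi(y)(\bar\eta_j(x)-\bar\eta_j(y))
\end{equation*}
together with \eqref{eq-A2} bounds the double integral of the difference by $\Lambda\|\varphi\|_\infty[\bar\eta_j]_{W^{s,1}(\mathbb{R}^n)}\to 0$ plus a term that vanishes by dominated convergence, using $(1-\bar\eta_j)\to 0$ a.e.\ and the finiteness of the Gagliardo seminorm of $\varphi\in C^\infty_c(\Omega)$. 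The $\mathscr{B}$-integral tends to $\int_\Omega \mathscr{B}\varphi\,\mathrm{d}x$ by dominated convergence, invoking \eqref{eq-B3} and $|E|=0$ from Lemma~\ref{lem-measure}. This proves \eqref{eq-main-weak} for every $\varphi\in C^\infty_c(\Omega)$, and since $|E|=0$, any pointwise extension of $u$ across $E$ is the desired representative. The main — but mild — technical point is the density step of the second paragraph, since Proposition~\ref{prop-test} cannot be cited directly; it is resolved because the uniform bound \eqref{eq-A2} makes the approximation completely independent of $u$.
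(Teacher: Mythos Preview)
Your argument is correct and follows essentially the same route as the paper's proof, which simply remarks that the third step of the proof of Theorem~\ref{thm-main} goes through verbatim with the $\mathscr{B}$-term handled via \eqref{eq-B3}. Your additional mollification of $\bar\eta_j$ is a welcome technical clarification, since Proposition~\ref{prop-test} is stated under the hypothesis $u\in W^{s,1}_{\mathrm{loc}}$ (even though its proof only uses $|\mathscr{A}|\leq\Lambda$ and local integrability of $\mathscr{B}$), and it makes the approximation step fully self-contained in the weak-solution setting.
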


The novelty of Theorem~\ref{thm-main-weak} is that the set $E$ can approach the boundary of $\Omega$. However, we do not know whether weak solutions are continuous, as the regularity theory is not available in this context. It is an interesting open question whether weak solutions are necessarily of class $W^{s, 1}_{\mathrm{loc}}$.

In fact, the proof of Theorem~\ref{thm-main-weak} for  the principal part (the left-hand side) of the equation \eqref{eq-main} is contained in the proof of Theorem~\ref{thm-main} because this part of the proof does not require any regularity on $u$ besides the measurability.

\begin{proof}[Proof of Theorem~\ref{thm-main-weak}]
The proof goes exactly the same way as in the proof of Theorem~\ref{thm-main}, except that we do not need $u \in W^{s, 1}_{\mathrm{loc}}(\Omega)$ and that we now have that
\begin{align*}
\left| \int_{\Omega \setminus E} \mathscr{B}\varphi_j \,\mathrm{d}x - \int_{\Omega} \mathscr{B} \varphi \,\mathrm{d}x \right|
&= \left| \int_{\Omega} \mathscr{B}(\varphi_j-\varphi) \,\mathrm{d}x \right| \\
&\leq \|\varphi\|_{L^\infty(\Omega)} \int_{\mathrm{supp}\,\varphi} |\mathscr{B}| (1-\bar{\eta}_j) \,\mathrm{d}x \to 0
\end{align*}
as $j \to \infty$, instead of \eqref{eq-step3-B}. Note that $\varphi_j=\varphi \bar{\eta}_j \in W^{s, 1}(\Omega \setminus E) \cap L^\infty(\Omega \setminus E)$ and it is compactly supported in $\Omega \setminus E$, as $\varphi$ is compactly supported in $\Omega$ and $\bar{\eta}_j$ vanishes in a neighborhood of $E$. Therefore, Proposition~\ref{prop-test} with $\Omega$ replaced by $\Omega \setminus E$ shows that \eqref{eq-weak-j} holds with $E$ in place of $K$. The rest of the argument remains valid for the principal part $\mathscr{A}$.
\end{proof}

\end{document}